\documentclass[10pt, reqno]{amsart}

\usepackage{amssymb,amsthm}
\usepackage{enumitem}
\usepackage{graphicx}
\usepackage{url}
\usepackage[margin=1in]{geometry}
\usepackage{hyperref}
\allowdisplaybreaks

\usepackage[T1]{fontenc}
\usepackage{lmodern}
\usepackage{microtype}

\graphicspath{{././figure/}}
\usepackage{thm-restate}

\newtheorem{theorem}{Theorem}[section]
\newtheorem{lemma}[theorem]{Lemma}
\newtheorem{proposition}[theorem]{Proposition}

\theoremstyle{definition}
\newtheorem{definition}[theorem]{Definition}

\theoremstyle{remark}
\newtheorem{remark}[theorem]{Remark}
\numberwithin{equation}{section}

\newcommand{\SU}{\mathrm{SU}}
\newcommand{\SL}{\mathrm{SL}}
\newcommand{\PSL}{\mathrm{PSL}}
\newcommand{\SO}{\mathrm{SO}}

\newcommand{\HH}{\mathbb{H}}
\newcommand{\CC}{\mathbb{C}}
\newcommand{\ZZ}{\mathbb{Z}}
\newcommand{\QQ}{\mathbb{Q}}
\newcommand{\RR}{\mathbb{R}}
\newcommand{\tr}{\operatorname{tr}}
\newcommand{\Gal}{\mathrm{Gal}}

\title{The homology cobordism group is not generated by graph homology $3$-spheres}

\author{Ben Mares}
\address{
Tensorial \\
Ferney-Voltaire, France
}

\author{Yuta Nozaki}
\address{
Department of Mathematics, Faculty of Science, Hokkaido University \\
Sapporo 060-0810 \\
Japan\vspace{-0.6em}}
\address{
International Institute for Sustainability with Knotted Chiral Meta Matter (WPI-SKCM$^2$), Hiroshima University \\
1-3-1 Kagamiyama, Higashi-Hiroshima, Hiroshima 739-8531 \\
Japan}
\email{nozaki@math.sci.hokudai.ac.jp}

\author{Masaki Taniguchi}
\address{Department of Mathematics, Graduate School of Science, Kyoto University, Kitashirakawa Oiwake-cho, Sakyo-ku, Kyoto 606-8502, Japan}
\email{taniguchi.masaki.7m@kyoto-u.ac.jp}

\begin{document}
\begin{abstract}
We show the homology cobordism group of homology $3$-spheres is not generated by graph homology $3$-spheres. Our technique is a combination of filtered instanton Floer theory and Gromov's simplicial volume. 
\end{abstract}
\maketitle

\setcounter{tocdepth}{1}

\section{Introduction}
\label{sec:Introduction}

Among homology cobordism groups, the three-dimensional integral
homology cobordism group $\Theta^3_{\mathbb Z}$ exhibits exceptional
behavior. Let $\Theta^n_{\mathbb Z}$ denote the smooth integral homology
cobordism group in dimension $n$, and let $\Theta^n$ denote the
homotopy-sphere cobordism group. 
Gonz\'{a}lez-Acu\~{n}a proved that $\Theta^n_{\mathbb Z}\cong \Theta^n$ for every $n\neq 3$
\cite{Gon70,Sav24}. 
By the work of Kervaire--Milnor \cite{KM63}, $\Theta^n$ is finite; in particular, the groups for $n=1,2, 4$ are trivial, while for $n\geq 5$ these groups are identified with the familiar finite
groups of oriented exotic spheres.
The contrast is even sharper in the PL and topological categories.
Kervaire proved that every PL homology $n$-sphere with $n\neq 3$ bounds
a contractible PL $(n+1)$-manifold \cite{Ker69}, and hence the
corresponding PL homology cobordism groups vanish outside dimension
three. In dimension
three, the smooth and PL categories agree, whereas the topological
homology cobordism groups vanish in all dimensions; in particular,
Freedman's work gives the dimension-three case \cite{Fre82}. Consequently, $\Theta^3_{\mathbb Z}$ is the exceptional
remaining case, and its structure is still a central open problem; see,
for example, \cite[Problem~3.69]{K3} and \cite{Sav24}.

Gauge theory and Floer theory have played central roles in the study of
$\Theta^3_{\mathbb Z}$. 
Furuta and Fintushel--Stern used instanton gauge theory to show that $\Theta^3_{\mathbb Z}$ contains a subgroup isomorphic to $\mathbb Z^\infty$ \cite{FS90,Fur90}.  
Fr{\o}yshov introduced a surjective homomorphism $h\colon \Theta^3_{\mathbb Z}\to\mathbb Z$ using $\SO(3)$-equivariant instanton Floer theory \cite{Fro02}, and Ozsv\'ath--Szab\'o obtained a similar homology cobordism invariant $d$ from Heegaard Floer theory~\cite{OS03}.
Manolescu's $\operatorname{Pin}(2)$-equivariant Seiberg--Witten Floer
theory produced the invariants $\alpha$, $\beta$, and $\gamma$; in
particular, $\beta$ is an integral lift of the Rokhlin invariant, leading to the
disproof of the triangulation conjecture~\cite{Man16} using a deep relation \cite{Mat78, GS80} between triangulation of topological manifolds and $\Theta^3_{\mathbb{Z}}$.  
Further applications of $\operatorname{Pin}(2)$-equivariant, involutive and filtered
Floer theories have revealed increasingly refined structure in $\Theta^3_{\mathbb Z}$ \cite{HM17, HMZ18,Dai20,  Sto20,HHL21, NST24}.  
In particular, Dai--Hom--Stoffregen--Truong \cite{DHST23} proved that $\Theta^3_{\mathbb Z}$ contains a direct summand isomorphic to $\mathbb Z^\infty$.  
See also \cite{FS85, FF00, Man03, Fro10,  Man14, Sto17, DM19, Fro23q2, DIST25,LS26,HSZ26, BS26}
for related developments in the study of homology cobordism.

Since many of the results above are obtained from Seifert homology $3$-spheres, largely because Floer-theoretic invariants are more computable in this setting, it is natural to ask what the geometric generators of $\Theta^3_{\mathbb{Z}} $ are.  
By Myers~\cite{Mye83}, every homology cobordism class contains a
hyperbolic representative, while Mukherjee~\cite{Muk20} showed that
$\Theta^3_\mathbb{Z}$ is generated by Stein fillable homology 3-spheres. 
An interaction between hyperbolic geometry and homology cobordism was also studied by Lin~\cite{FLin25}.

In the opposite direction, 
Hendricks--Hom--Stoffregen--Zemke~\cite{HHSZ} proved that the quotient of
$\Theta^3_{\mathbb Z}$ by the subgroup generated by Seifert fibered homology spheres is infinitely generated. 
In particular, $\Theta^3_{\mathbb Z}$ is not generated by Seifert fibered homology spheres. 
It was therefore natural to ask whether graph homology spheres, which form the
next class in the geometric hierarchy, generate $\Theta^3_{\mathbb Z}$. 
This question was explicitly raised in \cite{HHSZ}, \cite[Question~1.16]{NST24} and \cite[Problem~L]{Sav24}.  
Our main theorem gives a negative answer.

\begin{theorem}\label{main_theorem}
    The homology cobordism group of homology $3$-spheres is not generated by graph homology $3$-spheres.
\end{theorem}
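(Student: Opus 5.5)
The plan is to build a homomorphism-like obstruction from $\Theta^3_{\mathbb Z}$ that vanishes on every graph homology sphere, and then exhibit a homology sphere on which it does not vanish. As the abstract suggests, the invariant will come from filtered instanton Floer theory, namely the $r_s$-type invariants of Nozaki--Sato--Taniguchi \cite{NST24}. The key geometric input is that the Chern--Simons values realized by irreducible flat $\SU(2)$-connections are controlled by simplicial volume. For a graph manifold the Gromov norm is zero, and every flat $\SU(2)$ (or $\SL(2,\CC)$) representation has Chern--Simons invariant in a restricted set: rational, or determined by the Seifert pieces. For a hyperbolic manifold, on the other hand, the holonomy representation and its Galois conjugates give complex volume $\mathrm{Vol}+i\,\mathrm{CS}$, and the real parts are detected.

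First, I would define a quantity attached to the filtered instanton complex: roughly, the set of critical values $\mathrm{CS}(\rho)\in \RR/\ZZ$ that actually survive in the filtered Floer homology (the ``spectral'' values $r_s(Y)$). I would then prove that these surviving values are well defined on homology cobordism classes, using the standard filtered cobordism maps together with the fact that the invariants are independent of the chosen representative. Next comes a ``graph obstruction'': if $Y$ is a graph homology sphere, each $\SU(2)$ representation restricts on each Seifert piece to a representation whose Chern--Simons value is rational, by Kirk--Klassen/Auckly type formulas. Hence every spectral value of $Y$ lies in $\QQ/\ZZ$. For a finite connected sum $\#_i \pm Y_i$ of graph spheres, the spectral values are sums of those of the summands, so they also lie in $\QQ/\ZZ$. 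If $\Theta^3_{\mathbb Z}$ were generated by graph spheres, every class would therefore have all spectral invariants rational.

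Finally, I would exhibit a homology sphere $Y$, for instance surgery on a hyperbolic knot such as $1/n$-surgery on the figure-eight, that has an irreducible $\SU(2)$ representation with irrational Chern--Simons value which survives in the filtered theory. Irrationality would come from relating $\mathrm{CS}$ to the Galois conjugate of the geometric representation: if $\mathrm{CS}$ were rational, the Bloch group element would be torsion, but its image under a Galois embedding has nonzero volume, so it is non-torsion by Borel/Gromov.

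The main obstacle is this last step, namely showing that the irrational value survives, i.e.\ is not cancelled in the filtered complex. I would try to choose $Y$ so that the relevant critical point has minimal energy or extremal index, for example by arranging that the $\SU(2)$ representation variety near that level is a single nondegenerate point, forcing the spectral invariant to equal it.
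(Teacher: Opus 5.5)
\textbf{There is a genuine gap in the final step.} Your reduction is essentially the conditional argument of \cite[Proposition~1.17]{NST24}: $r_0$ is a homology cobordism invariant, and a finite $r_0$ is an irreducible critical value. Chern--Simons values on a connected sum of graph homology spheres are sums of rational values on the summands, so every class in $\Theta^3_G$ would have rational $r_0$. The gap is the irrationality step. You argue that if $\operatorname{cs}(\rho)$ were rational, then the Bloch group class of $\rho$ would be torsion, contradicting the nonzero volume of a Galois conjugate. Only the opposite implication is known: torsion in the Bloch group implies rationality of the Chern--Simons value. The implication you need (rational implies torsion, equivalently non-torsion implies irrational) is Ramakrishnan's conjecture, which is open. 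In fact it is not known whether \emph{any} flat $\SU(2)$ Chern--Simons invariant of a closed $3$-manifold is irrational, and irrationality of $r_0(S^3_{1/2}(5_2^*))$ is exactly the open Conjecture~1.19 of \cite{NST24}. So your plan reduces the theorem to an open problem rather than proving it. The obstacle you flagged, survival in the filtered complex, is not the real issue: once $r_0(Y)<\infty$ it is automatically a critical value, and \cite{NST24} computed it for $S^3_{1/2}(5_2^*)$.

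\textbf{How the paper avoids this.} It never needs irrationality, because it transports the \emph{representation} across a homology cobordism $W$ from $Y$ to $Z$, not its Chern--Simons value. The chain of steps is:
\begin{enumerate}
\item Since $\rho_1$ uniquely realizes $r_0(Y)$, the argument of \cite[Theorem~3.7]{NST24} gives $\tilde\rho\colon\pi_1(W)\to\SU(2)$ whose restriction to $\pi_1(Y)$ is equivalent to $\rho_1$.
\item The weak Nullstellensatz replaces $\tilde\rho$ by an extension defined over a number field $L$.
\item The embedding $\sigma$ that sends $\rho_1$ to the holonomy lift extends to $L$. This gives an $\SL(2,\CC)$-representation of $\pi_1(W)$ restricting to the holonomy on $Y$.
\item Stokes' theorem gives $|\operatorname{Vol}(Z,\rho_Z)|=\operatorname{Vol}_{\mathrm{hyp}}(Y)$.
\item Reznikov's inequality then yields $v_3\|Z\|\ge\operatorname{Vol}_{\mathrm{hyp}}(Y)>0$ for every $Z$ homology cobordant to $Y$.
\end{enumerate}
Since connected sums of graph homology spheres have zero simplicial volume, $[Y]\notin\Theta^3_G$. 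Your observation that a Galois conjugate of the distinguished representation has nonzero volume is the right ingredient. What is missing is applying it on the cobordism $W$ through extension of the representation, instead of trying to convert it into an irrationality statement on $Y$.
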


Theorem~\ref{main_theorem} is a corollary of the following theorem, which relates our study to Gromov's simplicial volumes~\cite{Gro82}.  
We consider the following group seminorm: 
\[
\| \text{--} \| \colon \Theta^3_\mathbb{Z} \to \mathbb{R}_{\geq 0}
\]
defined as $\| [Y] \| = \inf_{Z\in [Y]} \|Z\|$, 
where $\|Z\|$ denotes the simplicial volume.
The subadditivity of the infimum follows from additivity of connected sum,
and graph manifolds have vanishing simplicial volume
\cite[Corollary~7.8]{Fri17}.
Moreover, by geometrization,
the null subgroup of this seminorm is exactly the subgroup $\Theta^3_G$ generated
by graph homology $3$-spheres\footnote{Indeed, by the prime decomposition theorem and geometrization, the simplicial volume of a closed oriented $3$-manifold is positive precisely when one of its prime summands has a hyperbolic JSJ piece. 
Moreover, finite-volume hyperbolic $3$-manifolds have a universal
positive lower bound on their volumes.  
Thus an integral homology $3$-sphere has zero simplicial volume if and only if it is a connected sum of graph homology $3$-spheres; see \cite{Gro82,Som81,Fri17}.}. 
Thus, our seminorm descends to a norm on the quotient
$\Theta^3_{\mathbb{Z}}/\Theta^3_G$, and our result can be expressed as follows.

\begin{theorem}\label{main:simplicial_volume}
  Let $Y$ be the homology $3$-sphere $S^3_{1/2}(5_2^*)$ obtained by Dehn surgery on the mirror image of the knot $5_2$ along slope $1/2$, and let $[Y]$ denote the corresponding class in $\Theta^3_{\mathbb{Z}}$. 
  Then $\| [Y] \| = \| Y \| \approx 2.19$. 
\end{theorem}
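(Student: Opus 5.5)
Since $\|Z\|\ge 0$ for every $Z$, we have $\|[Y]\|\le \|Y\|$ trivially, so the content of the statement is the reverse inequality: every homology sphere $Z$ homology cobordant to $Y=S^3_{1/2}(5_2^*)$ must satisfy $\|Z\|\ge \|Y\|$. The value $\|Y\|\approx 2.19$ should come from $\|Y\| = \mathrm{vol}(Y)/v_3$, where $v_3\approx 1.0149$ is the volume of the regular ideal tetrahedron. For this I would check with SnapPy, and certify by interval arithmetic, that $Y$ is hyperbolic with volume $\approx 2.2208$. This is a small-volume closed hyperbolic manifold, presumably among the smallest-volume hyperbolic homology spheres, of volume about $2.22$.

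\textbf{Reduction via geometrization.} Let $Z\in[Y]$. By the prime decomposition and the JSJ decomposition, $\|Z\| = \sum \mathrm{vol}(H_i)/v_3$, summing over the hyperbolic JSJ pieces $H_i$ of all prime summands of $Z$ (Soma, Gromov). It suffices to show that $Z$ has a hyperbolic piece, or several, of total volume at least $\mathrm{vol}(Y)$.

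\textbf{Floer-theoretic input.} I would use the filtered instanton invariant $r_s$ (Nozaki--Sato--Taniguchi) or a similar homology cobordism invariant that records Chern--Simons values of flat $\SU(2)$ connections. Suppose $r_0(Y)<\infty$, with $r_0(Y)$ equal to a specific Chern--Simons value attained by an irreducible flat connection on $Y$. Homology cobordism invariance then forces every $Z\in[Y]$ to carry a flat $\SU(2)$ connection with exactly that Chern--Simons value modulo $\ZZ$, namely $r_0(Z)=r_0(Y)$. Meanwhile, for graph manifold pieces and connected sums, the possible Chern--Simons values are constrained. For Seifert and graph pieces they are rational with controlled denominators, and connected sums combine values additively via flat connections on the summands.

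\textbf{Linking volume to Chern--Simons.} The geometric holonomy $\rho_{\mathrm{geo}}\colon\pi_1\to\PSL(2,\CC)$ has complex Chern--Simons invariant $\mathrm{CS}+i\,\mathrm{vol}/(2\pi^2)$. I would compute the $\SU(2)$ character variety of $Y$ and observe that the relevant $\SU(2)$ representation is a Galois conjugate of the geometric representation; this happens when the trace field has a real place giving a compact form. The Chern--Simons value of this conjugate is then an algebraic, Bloch-group-type quantity tied to the trace field. Next I would argue that a flat connection on $Z$ with the same Chern--Simons value, and in particular an irrational value not realizable by graph pieces, must come from a hyperbolic piece whose invariant trace field and Bloch group element match those of $Y$. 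Volume is the Borel regulator at the complex place of the same Bloch group element, so the hyperbolic pieces of $Z$ whose class maps to $Y$'s class must together carry volume at least $\mathrm{vol}(Y)$.

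I expect this last step to be the main obstacle: passing from ``same $\SU(2)$ Chern--Simons value'' to ``at least as much hyperbolic volume.'' The argument needs rigidity of the Bloch group, an injectivity of the regulators for the specific field, and careful handling of sums over several pieces and of non-Galois-conjugate $\SU(2)$ representations of hyperbolic pieces. A fallback is the following. $\mathrm{vol}(Y)\approx 2.22$ is small, so if $Z$ had hyperbolic volume below $\mathrm{vol}(Y)$, all of its hyperbolic pieces would lie in the finite, censused list of cusped and closed hyperbolic manifolds of small volume. I would rule them out case by case, using their Chern--Simons spectra together with the invariants $r_s$.
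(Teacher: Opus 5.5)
\textbf{There is a genuine gap: the core step you flag as the main obstacle cannot be completed.} Your trivial inequality $\|[Y]\|\le\|Y\|$ and the reduction to hyperbolic JSJ pieces are fine. The trouble is what you keep from the Floer theory. From homology cobordism invariance of $r_0$ you retain only that $Z$ carries \emph{some} flat $\SU(2)$ connection whose Chern--Simons value equals $r_0(Y)$. A single real number of this kind cannot currently be turned into a volume bound.

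Even the weaker conclusion $[Y]\notin\Theta^3_G$ along your route would need the irrationality of $r_0(Y)$. That is open: it is a conjecture of Nozaki--Sato--Taniguchi. Your sharp bound $\|Z\|\ge\|Y\|$ would additionally need a Ramakrishnan-type rigidity statement, namely that one Chern--Simons value determines the Bloch group class up to torsion. That is also open. Moreover, the flat connection on $Z$ need not be related to the geometric holonomy of any JSJ piece of $Z$, so nothing forces its Bloch class to be carried by the hyperbolic pieces.

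Your fallback does not work either. There are infinitely many closed hyperbolic $3$-manifolds of volume below $\operatorname{Vol}_{\mathrm{hyp}}(Y)$, for example the fillings of the figure-eight knot complement, whose volumes accumulate at $2.0299$. Small cusped pieces can be glued to arbitrary Seifert pieces, and $Z$ can be connect-summed with arbitrary graph homology spheres. So there is no finite census to check. (A minor point: SnapPy gives $\operatorname{Vol}_{\mathrm{hyp}}(Y)\approx 2.2267$, not $2.2208$, though the ratio to $v_3$ is still $\approx 2.19$.)

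\textbf{The missing idea is to keep the cobordism $W$ from $Y$ to $Z$ in play and transport a representation, not a number.} The argument runs as follows.

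First, the irreducible $\SU(2)$ representation $\rho_1$ with $\operatorname{cs}_Y(\rho_1)=r_0(Y)$ is unique up to conjugacy. Using this uniqueness, the filtered Floer argument produces $\tilde\rho\colon\pi_1(W)\to\SU(2)$ with $\tilde\rho|_{\pi_1(Y)}=\rho_1$. This is much stronger than a flat connection on $Z$ with the same Chern--Simons value.

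Second, $\rho_1$ is defined over a number field $F$. This follows from $M=L^2$, $A_{5_2}(L,L^2)\neq 0$, and the Riley polynomial. The condition ``extends $\rho_F$ over $\pi_1(W)$'' is a polynomial system with coefficients in $F$. The weak Nullstellensatz therefore yields an extension $\tilde\rho_L\colon\pi_1(W)\to\SL(2,L)$ with $L/F$ finite.

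Third, your observation that $\rho_1$ is Galois conjugate to the geometric representation is exactly the input needed here. Concretely, $\tr\rho_1(\lambda)$ is a real root of the irreducible sextic factor $p_6$ of $A_{5_2}(L,L^2)$. The two non-real roots of $p_6$ give the holonomy lift and its complex conjugate, and the $12$ irreducible representations are separated by $\tr\rho(\lambda)$.

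Fourth, extend the embedding $\sigma$ to $L$. This gives a representation of $\pi_1(W)$ that restricts to the holonomy lift on $Y$. Let $\rho_Z$ be its restriction to $Z$. Stokes' theorem shows $|\operatorname{Vol}(Z,\rho_Z)|=\operatorname{Vol}_{\mathrm{hyp}}(Y)$. Reznikov's inequality $|\operatorname{Vol}(Z,\rho_Z)|\le v_3\|Z\|$ then gives $\|Z\|\ge\|Y\|$ directly.

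This route needs no geometrization of $Z$, no JSJ bookkeeping, and no rationality statements. In Bloch-group terms, it is the common extension over $W$ that carries the whole class, and hence all regulators, from $Y$ to $Z$. Equality of one Chern--Simons value does not do this.
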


Theorem~\ref{main:simplicial_volume} implies that $Y$ achieves a positive infimum value for its simplicial volume, and therefore $[Y]$ exhibits a non-trivial element of the quotient group $\Theta^3_{\mathbb{Z}}/\Theta^3_G$.
This shows Theorem~\ref{main_theorem}.

Theorem~\ref{main:simplicial_volume} follows from Theorem~\ref{thm:invariance-volume} below.  
In order to state Theorem~\ref{thm:invariance-volume}, we need two tools: \emph{filtered instanton invariants} and \emph{Galois conjugations} of representations. 

The first tool is the filtered instanton invariant 
\[
r_0(Y) \in (0, \infty]
\]
of oriented homology $3$-spheres introduced by Nozaki--Sato--Taniguchi \cite{NST24}, which is invariant under smooth homology cobordism. 
Roughly speaking, $r_0(Y)$ measures a distinguished Chern--Simons level detected by the cohomology class called $[\theta]$ in the filtered instanton Floer complex. 
This level is represented by an irreducible flat $\SU(2)$-connection. 
In this paper, when we say that an irreducible representation $\rho\colon \pi_1(Y)\to \SU(2)$
\emph{uniquely realizes the $r_0$-invariant}, we mean that $\rho$ represents this
distinguished filtered level, uniquely up to $\SU(2)$-conjugacy.
The precise definition of $r_0$ and of what it means for a representation to realize $r_0$ will be given in Section~\ref{section: filtered instanton Floer}.

The second notion is Galois conjugation of an algebraic representation.  
For the role of Galois conjugation in the arithmetic study of Kleinian groups and hyperbolic $3$-manifolds, see, for example, \cite{MR03}.

A number field is a finite extension $F/\mathbb{Q}$.
A representation $\rho \colon \pi_1(Y) \to \SL(2,\mathbb{C})$ is said to be \emph{algebraic} if, up to equivalence, it factors through $\SL(2,F)\hookrightarrow \SL(2,\mathbb{C})$ for some number field $F$ and some field embedding $\iota \colon F \hookrightarrow \mathbb C$.  
 
For an algebraic representation $\rho_F$ and  each field embedding $
\sigma \colon F \hookrightarrow \mathbb{C}$,
we define the representation $
\rho_F^\sigma \colon \pi_1(Y) \to \SL(2,\mathbb{C})$
by applying $\sigma$ to every matrix entry:
\[
\rho_F^\sigma(g)
:=
\sigma\bigl(\rho_F(g)\bigr) \in \mathrm{SL}(2, \mathbb{C}).
\]
We refer to $\rho_F^\sigma$ as a \emph{Galois conjugate representation} of $\rho_F$.

The following is the main technical theorem in this paper.

\begin{restatable}{theorem}{volumeinvariance}
\label{thm:invariance-volume}
Suppose $Y$ is a hyperbolic integral homology $3$-sphere, and that an irreducible representation
$\rho \colon \pi_1(Y)\to \SU(2)$
uniquely realizes the $r_0$-invariant. 
Suppose that $\rho$ is algebraic and Galois conjugate to an $\mathrm{SL}(2, \mathbb{C})$-lift of the $\mathrm{PSL}(2, \mathbb{C})$ holonomy representation of the oriented hyperbolic structure of $Y$. 
If $Z$ is an integral homology $3$-sphere which is smoothly homology cobordant to $Y$, then
\[
\| Z \| \geq  \|Y\|= \frac{\operatorname{Vol}_{\mathrm{hyp}}(Y)}{v_3}>0, 
\]
where $\operatorname{Vol}_{\mathrm{hyp}}(Y)$ is the hyperbolic volume of $Y$ and $v_3\approx1.01494161$ denotes the volume of the regular ideal hyperbolic tetrahedron.
\end{restatable}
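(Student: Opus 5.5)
We use the homology cobordism $W$ from $Y$ to $Z$ to transport the distinguished flat connection $\rho$ to $Z$. Then we compare the resulting representation's Galois conjugate with the hyperbolic holonomy via the volume rigidity inequality for representations.

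\emph{Step 1: extension across the cobordism.} Let $W$ be a smooth homology cobordism from $Y$ to $Z$. By the homology cobordism invariance of $r_0$ from \cite{NST24}, $r_0(Z)=r_0(Y)$. We then need the refined statement that the level $r_0$ is realized through $W$. Concretely, the filtered cobordism map in the region near $r_0$ is nonzero. The standard energy/compactness argument then yields a flat $\SU(2)$-connection on $W$ whose restriction to $Y$ has Chern--Simons value $r_0(Y)$ and whose restriction to $Z$ realizes $r_0(Z)$. Such a connection is produced as a limit of instantons on $W$ with cylindrical ends whose energy tends to zero.

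Since $\rho$ uniquely realizes $r_0$ on $Y$, the restriction to $Y$ must be conjugate to $\rho$. This gives $\tilde\rho\colon\pi_1(W)\to\SU(2)$ with $\tilde\rho|_{\pi_1(Y)}=\rho$. Set $\rho_Z=\tilde\rho|_{\pi_1(Z)}$. I expect this to be the main obstacle: one must ensure that the chain-level element detecting $[\theta]$ forces a genuine flat connection on $W$, rather than merely broken trajectories. The uniqueness hypothesis is exactly what rules out the breaking. I would follow the proof of the inequality $r_0(Y)\le r_0(Z)$ in \cite{NST24} and extract the limiting flat connection from it.

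\emph{Step 2: algebraicity and Galois conjugation on $W$.} The character variety of $\pi_1(W)$ is defined over $\QQ$. The fiber of the restriction map over the character of $\rho$ contains $\tilde\rho$. We want to choose the extension so that it is algebraic over some number field $F'\supset F$. Since $\rho$ is algebraic, the fiber is defined over a number field, and isolated points of it are algebraic. If the fiber is positive dimensional, we instead use that $\overline{\QQ}$-points are dense in it, approximating while keeping the restriction fixed. Extend $\sigma$ to $\sigma'\colon F'\hookrightarrow\CC$, where $\sigma$ is the embedding sending $\rho$ to a lift of the holonomy. Then $\tilde\rho^{\sigma'}$ restricts on $Y$ to the lift $\rho_{\mathrm{hyp}}$ of the holonomy, and on $Z$ to $\rho_Z^{\sigma'}$.

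\emph{Step 3: volume comparison.} For a closed 3-manifold $M$ and $\eta\colon\pi_1 M\to\PSL(2,\CC)$, the volume $\operatorname{Vol}(\eta)$ is defined via the pullback of the hyperbolic volume class in bounded cohomology. We use two facts. First, $|\operatorname{Vol}(\eta)|\le v_3\|M\|$ (Gromov, Bucher--Burger--Iozzi). Second, $\operatorname{Vol}$ of the holonomy is $\operatorname{Vol}_{\mathrm{hyp}}(M)$. Now $\operatorname{Vol}$ is a characteristic number: it is the imaginary part of the complex Chern--Simons invariant, i.e. the pullback of a continuous class in $H^3_c(\PSL(2,\CC);\RR)$. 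A 4-manifold $W$ with a representation extending over both ends gives equality of these invariants on the boundary. This holds because the class pulled back to $H^3(W)$ evaluates on $\partial[W]=[Z]-[Y]$ to zero. Hence $\operatorname{Vol}(\rho_Z^{\sigma'})=\operatorname{Vol}(\rho_{\mathrm{hyp}})=\operatorname{Vol}_{\mathrm{hyp}}(Y)$, which gives $v_3\|Z\|\ge\operatorname{Vol}_{\mathrm{hyp}}(Y)$. Finally, $\|Y\|=\operatorname{Vol}_{\mathrm{hyp}}(Y)/v_3>0$ by Gromov--Thurston.
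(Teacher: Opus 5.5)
Your proposal is correct and follows essentially the paper's route: the $\SU(2)$ extension over $W$ via \cite[Theorem~3.7]{NST24} plus uniqueness, then an algebraic replacement with the restriction to $Y$ fixed, then Galois conjugation, and finally invariance of volume across $W$ combined with the volume inequality (the paper cites Reznikov). For the algebraic step the paper simply applies the weak Nullstellensatz to the representation variety of $\pi_1(W)$ with boundary values $\rho_F$, so your isolated-point/density discussion is unnecessary; likewise its Stokes argument is equivalent to your cohomological evaluation on $\partial[W]$, and note that the existence of the flat connection on $W$ with Chern--Simons value $r_0$ at both ends does not need uniqueness, which is used only to identify its restriction to $Y$ with $\rho$.
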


A sketch of the proof of Theorem \ref{thm:invariance-volume} is given as follows: 
The instanton invariant $r_0(Y)$ proves the existence of an $\SU(2)$-valued representation that extends over any integral homology cobordism whose restriction to $Y$ is $\rho$. Then we replace this representation twice. The weak Nullstellensatz allows us to replace the $\SU(2)$ extension with a fully algebraic $\SL(2,\CC)$ extension. Then Galois conjugation allows us to replace this algebraic representation with a representation that restricts to the holonomy representation on $Y$, so that it carries positive representation volume. 
See Definition~\ref{def:volume} for the definition of volumes of representations. 
By Stokes' theorem, representation volume is preserved across the homology cobordism, hence $Z$ admits an $\mathrm{SL}(2, \mathbb{C})$-representation with positive representation volume. Using Reznikov's volume inequality~\cite[Section~5.1]{Rez96} comparing representation volume and simplicial volume, we get the desired inequality. 

\begin{remark}
    More generally, the same argument as in Theorem~\ref{thm:invariance-volume} shows that if any Galois conjugate of the distinguished representation has non-zero volume, then
\[
\|[Y]\|
\geq
\frac{\left|\operatorname{Vol}(Y,\rho^\sigma)\right|}{v_3}
>0,
\]
where $\operatorname{Vol}(Y,\rho^\sigma)$ denotes the volume of the representation $\rho^\sigma$, even if $\rho^\sigma$ is not equivalent to the lift of the holonomy representation of $Y$. 
\end{remark}

\begin{remark}
   Similar statements hold for real-valued homology cobordism invariants derived from instanton theory with Chern--Simons filtrations such as $r_s(Y)$ \cite{NST24} for a general $s \in [-\infty, 0]$, $\Gamma_Y(k)$ \cite{Dai20} for $k\in \mathbb{Z}$, and analogous invariants introduced in \cite{DIST25}. 
\end{remark}

We shall focus on the hyperbolic homology sphere $S^3_{1/2}(5_2^*)$ and show the following theorem.

\begin{restatable}{theorem}{maincomputation} \label{thm:main-computation}
Let $Y=S^3_{1/2}(5_2^*)$ and let $\rho_1\colon \pi_1(Y)\to \SU(2)$ be an irreducible $\SU(2)$-representation uniquely realizing the $r_0$-invariant. 
Then, up to equivalence, $\rho_1$ is algebraic over a number field $F$, and there exists a field embedding $\sigma\colon F \hookrightarrow \mathbb{C}$ such that the corresponding Galois conjugate representation $\rho_1^\sigma\colon \pi_1(Y)\to \SL(2,\mathbb{C})$ is equivalent to an $\SL(2,\mathbb{C})$-lift of the holonomy representation coming from the hyperbolic structure of $Y$. 
\end{restatable}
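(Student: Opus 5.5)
The plan is to make everything explicit through the character variety of $\pi_1(Y)$, which we build from the knot group of $5_2^*$. We begin with a two-generator, one-relator presentation of $\pi_1(S^3\setminus 5_2^*)$, using two meridians $x,y$ and a relation $wx=yw$, together with a longitude word $\lambda$. Dehn filling along slope $1/2$ adds the relation $\mu\lambda^2=1$, where $\mu=x$. We put irreducible $\SL(2,\CC)$-representations in the normal form
\[
\rho(x)=\begin{pmatrix} s & 1\\ 0 & s^{-1}\end{pmatrix},\qquad \rho(y)=\begin{pmatrix} s & 0\\ -u & s^{-1}\end{pmatrix}.
\]
The knot relation then becomes the Riley polynomial $\phi(s,u)=0$, which is cubic in $u$ for $5_2$. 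In addition, $\rho(\lambda)$ is upper triangular with diagonal entry $\ell(s,u)$, so the surgery relation reduces to $s\,\ell(s,u)^2=1$.

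Eliminating $u$, for example by a resultant, gives a single polynomial $P(t)\in\ZZ[t]$ in the trace variable $t=s+s^{-1}$, or equivalently in $s$. Every irreducible $\SL(2,\CC)$-representation of $\pi_1(Y)$ is then conjugate to one defined over $F=\QQ(s,u)$, a number field. This gives algebraicity of all irreducible representations at once, and in particular of $\rho_1$. The field embeddings $\sigma\colon F\hookrightarrow\CC$ permute the roots within each irreducible factor of $P$, and the conjugate representation $\rho_1^\sigma$ is the one attached to the image root.

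The next step is to locate the two distinguished points among these roots.

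\textbf{The holonomy point.} We run SnapPy, or solve the gluing equations, to obtain the approximate trace of the meridian under the hyperbolic structure on $S^3_{1/2}(5_2^*)$. We then identify the root of $P$ that matches it, to enough precision to separate it from all other roots. Rigorous root isolation, for instance by interval arithmetic, makes this identification certain.

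\textbf{The $r_0$-realizing point.} The $\SU(2)$-representations correspond to roots with $s$ on the unit circle and $u$ real in the appropriate range. For each of them we compute the Chern--Simons invariant, for example by Kirk--Klassen path integration along the $\SU(2)$ arc of the $5_2$ character variety from the trivial representation. Using the definition of $r_0$ from Section~\ref{section: filtered instanton Floer}, together with the results of \cite{NST24}, we then identify which of these representations, $\rho_1$, realizes $r_0$. The statement presupposes that such a unique $\rho_1$ exists; we only need to locate it.

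The final step is to show that the $\SU(2)$ point and the holonomy point are roots of the same irreducible factor of $P$ over $\QQ$, after factoring $P$ exactly. Irreducibility of that factor is what makes them Galois conjugate. There is one subtlety about lifts and signs: the $\PSL(2,\CC)$ holonomy has two $\SL(2,\CC)$ lifts, differing by the central character $\pi_1(Y)\to\{\pm1\}$. Since $H_1(Y)=0$, this character is trivial, so the lift is unique. We only need $s^2$ or the eigenvalue normalization to match, and this can be handled by allowing $s\mapsto -s$ if necessary.

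The main obstacle is the identification of $\rho_1$. The Galois computation is exact algebra, but deciding which $\SU(2)$ representation uniquely realizes $r_0$ requires reliable Chern--Simons values, together with an argument that no other flat connection at or below that level interferes.
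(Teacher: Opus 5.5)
Your plan is sound and has the same skeleton as the paper's proof. Both put representations in Riley normal form and eliminate $u$ to get a one-variable integer polynomial. The paper uses $A_{5_2}(L,L^2)=L^{14}(1+L)^3p_2p_4p_6$ in the longitude trace $x=L+L^{-1}$; you use a resultant in the meridian trace $t$, and $t=x^2-2$. Both then get algebraicity from finiteness, take $\rho_1$ from \cite{NST24} (where $\tr\rho_1(\lambda)\approx 1.96292$ is a root of $p_6$), and obtain Galois conjugacy from lying on a common $\QQ$-irreducible factor.

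Where you genuinely differ is in locating the holonomy point. You match it against a certified SnapPy holonomy computation. The paper needs no numerical holonomy at all:
\begin{itemize}
\item $p_2$ and $p_4$ have only real roots, and $p_6$ has exactly one non-real conjugate pair.
\item At a real root, $\bar\rho$ has the same $\lambda$-trace, so $[\bar\rho]=[\rho]$ because the classes are determined by $\tr\rho(\lambda)$. The character is then real and the volume vanishes by Lemma~\ref{lem:real-character-zero-volume}.
\item The $\SL(2,\CC)$-lift of the holonomy has volume $\operatorname{Vol}_{\mathrm{hyp}}(Y)\neq 0$, so it must sit at one of the two non-real roots of $p_6$.
\end{itemize}
The paper's route uses only hyperbolicity and the exactly checkable real/non-real pattern of the roots, and it also explains the zero volumes in Table~\ref{tab:SL2reps}. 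Yours is more direct and bypasses the volume lemma, at the cost of verified numerics for the hyperbolic structure.

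Both routes, however, rest on a fact you assert but do not justify: that each root carries exactly one representation class, so that $\rho_1^\sigma$ ``is the one attached to the image root.'' Knowing $\sigma(t_1)=t_{\mathrm{hol}}$ only tells you that $\rho_1^\sigma$ is \emph{some} representation of $\pi_1(Y)$ with that meridian trace. Two things make this non-automatic:
\begin{itemize}
\item $\phi$ is cubic in $u$.
\item Your variable $t$ forgets the sign of $\ell$ in $s\ell^2=1$.
\end{itemize}
So several classes could a priori share one value of $t$, and $\sigma$ could then carry $(s_1,u_1)$ to a non-holonomy point. The paper closes this by checking that, for each root, only one of the three solutions of $\phi(L_0^2,u)=0$ satisfies the filling relation. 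Hence the $12$ classes are determined by $\tr\rho(\lambda)$, and this is exactly what Lemma~\ref{lem:same_polynomial} uses. You need the same check, or alternatively you can work with the Galois orbit of the pair $(s,u)$ in the zero-dimensional solution set rather than with $t$ alone.
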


The above two theorems immediately imply Theorem~\ref{main:simplicial_volume}, and hence
Theorem~\ref{main_theorem}. 

\begin{proof}[Proof of Theorem~\ref{main:simplicial_volume}]
Let $Y=S^3_{1/2}(5_2^*)$. 
By Theorem~\ref{thm:main-computation}, an irreducible $\SU(2)$-representation uniquely realizing $r_0(Y)$ is an algebraic representation admitting a Galois conjugate equivalent to an $\mathrm{SL}(2, \mathbb{C})$-lift of the hyperbolic
holonomy representation of $Y$. 
Hence, Theorem~\ref{thm:invariance-volume} implies that
\[
\inf_{Z\in [Y]} \|Z\|= \|Y\| = \frac{\operatorname{Vol}_{\mathrm{hyp}}(Y)}{v_3}. 
\]
A SnapPy computation~\cite{SnapPy} gives
$\operatorname{Vol}_{\mathrm{hyp}}(Y)\approx 2.22671790$.
By the proportionality principle (\cite[Section~2.2]{Gro82}, \cite[Theorem~6.2]{Thu80}), we have $\operatorname{Vol}_{\mathrm{hyp}}(Y)=v_3\|Y\|$, and thus $\|Y\|\approx2.19394$.
\end{proof}

\begin{remark}
The manifold $S^3_{1/2}(5_2^*)$ already appeared in \cite{NST24}
in connection with the question of whether $\Theta^3_{\mathbb Z}$ is
generated by graph homology $3$-spheres. 
Indeed, it was observed there that if the distinguished Chern--Simons critical value
$r_0(S^3_{1/2}(5_2^*))$ were irrational, then
$[S^3_{1/2}(5_2^*)]$ would not lie in the subgroup generated by graph
homology $3$-spheres (\cite[Proposition~1.17]{NST24}).
Based on a high-precision numerical computation, this value was conjectured to be irrational (\cite[Conjecture~1.19]{NST24}).  
This irrationality remains open; more generally, it is not known whether the Chern--Simons invariant of a flat $\SU(2)$-connection on a closed $3$-manifold can be irrational; see also \cite[Problem~3.64]{K3}. 
The present argument bypasses this unresolved irrationality problem. 
While Bloch groups are not essential for our proof, we briefly mention the powerful perspective they provide for rationality. 
Chern--Simons values of algebraic $\SU(2)$-representations are rational when the corresponding class in a Bloch group is torsion. 
Torsion is determined by the vanishing of the Borel regulator, which is essentially the vector of volumes of all Galois conjugate representations. 
Our observation that $\rho_1$ is Galois conjugate to a representation with non-zero volume thus proves that the Bloch group class is non-torsion. 
The converse of ``torsion implies rational'' is the subject of Ramakrishnan's conjecture \cite[Section~4]{neumann1995rationality}, and would thus imply the irrationality conjecture made in \cite{NST24}. 
For details, see \cite[Section~2, Theorem~5.1]{neumann1995rationality} and \cite{Neu04, GZ07}.
\end{remark}

Throughout this paper, all manifolds are assumed to be smooth, compact, orientable, and oriented unless otherwise stated.

\section*{AI disclosure}
The authors used generative AI tools (Claude Fable 5 and ChatGPT 6 Astra) for exploratory purposes, computation and proof cross-checks, and proofreading. All resulting mathematical arguments, computations, and references have been fully verified by the authors. No part of this manuscript was generated by AI; it was entirely human-written.

\section*{Acknowledgements}
The second and third authors were partially supported by JSPS KAKENHI Grant Numbers JP23K12974 and JP22K13921, respectively.
The authors thank Kouki Sato for the discussions.

\section{Preliminaries}

\subsection{Fundamental properties of volumes}
\label{subsec:volume}
We first review the notion of volume for representations.
For a closed oriented $3$-manifold equipped with a representation $\rho \colon \pi_1(Y) \to \PSL(2, \mathbb{C})$, we have the volume $\mathrm{Vol}(Y, \rho) \in \mathbb{R}$ as a natural generalization of hyperbolic volumes. 
See \cite{Gol82} and \cite{Dun99} for the general definition and history. 

Let $\widetilde{Y}$ be the universal cover of $Y$. 
Since $\HH^3$ is contractible, we can take a $\pi_1(Y)$-equivariant smooth map $f\colon \widetilde{Y} \to \mathbb{H}^3$, where $\pi_1(Y)$ acts on $\mathbb{H}^3$ via $\rho$ as orientation-preserving isometries. 
Let $\omega_\rho$ be the $3$-form obtained as the quotient of $f^* \mathrm{vol}_{\mathbb{H}^3}$ by $\pi_1(Y)$. 

\begin{definition}\label{def:volume}
The \emph{volume} $\mathrm{Vol}(Y, \rho)$ of a representation is defined by
\[
\mathrm{Vol}(Y, \rho) = \int_Y \omega_\rho \in \mathbb{R}. 
\]
\end{definition}

The resulting value is independent of the choice of $f$. 
In this section, we summarize the properties of volumes of representations. 

\begin{remark}
When we define volumes of representations, we use $\mathrm{PSL}(2, \mathbb{C})$ as the target Lie group. 
However, it is sometimes convenient to use $\mathrm{SL}(2, \mathbb{C})$. 
Note that for all homology $3$-spheres, any $\mathrm{PSL}(2, \mathbb{C})$-representation of the fundamental group uniquely lifts to an $\mathrm{SL}(2, \mathbb{C})$-representation. 
In particular, any hyperbolic homology $3$-sphere admits an $\SL(2,\CC)$-lift of its holonomy representation. 
For an $\mathrm{SL}(2, \mathbb{C})$-representation of a closed oriented $3$-manifold $Y$, its volume is defined by the volume of the corresponding $\mathrm{PSL}(2, \mathbb{C})$-representation via the quotient map $\mathrm{SL}(2, \mathbb{C}) \to \mathrm{PSL}(2, \mathbb{C})$. 
\end{remark}

Let $\Gamma$ be a group. 
For a representation $\rho\colon \Gamma\to\SL(2,\mathbb C)$ and an element $g\in\SL(2,\mathbb C)$, we write $g\rho g^{-1}$ for the representation defined by 
\[ 
(g\rho g^{-1})(\gamma) = g\rho(\gamma)g^{-1}. 
\] 
Two representations related in this way are said to be \emph{equivalent}, and the equivalence class of $\rho$ is denoted by $[\rho]$. 
Let $\kappa\colon\SL(2,\mathbb C)\to\SL(2,\mathbb C)$
be the involutive automorphism induced by complex conjugation:
\[
\kappa(A):=\overline{A},
\qquad
A\in\mathrm{SL}(2,\mathbb C).
\]

\begin{definition}
For a representation $\rho\colon \Gamma\to \SL(2,\mathbb C)$,
its \emph{complex conjugate representation} is defined by 
\[
\bar{\rho}:=\kappa\circ\rho \colon \Gamma \to \SL(2,\mathbb C).
\]
The character $\chi_\rho$ of $\rho$ is said to be \emph{real} if $\chi_\rho(\gamma) = \tr\rho(\gamma)$ is real for every $\gamma\in \Gamma$.
\end{definition}

Let $\Gamma=\pi_1(Y)$ for a closed oriented $3$-manifold $Y$.

\begin{lemma}
\label{lem:real-character-zero-volume}
Let
$\rho\colon\pi_1(Y)\to\SL(2,\mathbb C)$
be an irreducible representation and $\overline{\rho}$ be its complex conjugate. 
Then, we have 
\[
\operatorname{Vol}(Y,\rho)= -\operatorname{Vol}(Y, \bar{\rho}). 
\]
In particular, for an irreducible representation $\rho\colon\pi_1(Y)\to\SL(2,\mathbb C)$ whose character is real, we have 
\[
\operatorname{Vol}(Y,\rho)=0.
\]
\end{lemma}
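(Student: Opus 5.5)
Complex conjugation on $\SL(2,\mathbb C)$ descends to $\PSL(2,\mathbb C)$, and there it is realized as conjugation by an orientation-reversing isometry of $\HH^3$. Concretely, in the upper half-space model $\HH^3=\{(z,t): z\in\CC, t>0\}$, let $r(z,t)=(\bar z,t)$. This $r$ is an orientation-reversing isometry, so $r^*\mathrm{vol}_{\HH^3}=-\mathrm{vol}_{\HH^3}$.

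The first step is to check the intertwining identity
\[
r\circ A = \overline{A}\circ r \qquad \text{for all } A\in \PSL(2,\CC).
\]
On the boundary $\hat{\CC}$ this is immediate: $\overline{A\cdot z}=\overline{A}\cdot\bar z$ for Möbius transformations. Since an isometry of $\HH^3$ is determined by its boundary action, the identity holds on all of $\HH^3$. Alternatively, one can verify it directly with the quaternionic formula for the action.

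The second step builds the developing map. Take a $\rho$-equivariant smooth map $f\colon\widetilde Y\to\HH^3$ and set $\bar f:=r\circ f$. Then
\[
\bar f(\gamma x)=r(\rho(\gamma)f(x))=\overline{\rho(\gamma)}\,r(f(x))=\bar\rho(\gamma)\bar f(x),
\]
so $\bar f$ is $\bar\rho$-equivariant. Pulling back the volume form gives $\bar f^*\mathrm{vol}=f^*r^*\mathrm{vol}=-f^*\mathrm{vol}$. Hence $\omega_{\bar\rho}=-\omega_\rho$ as forms on $Y$. Since the volume is independent of the choice of equivariant map, integrating gives $\operatorname{Vol}(Y,\bar\rho)=-\operatorname{Vol}(Y,\rho)$.

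For the second assertion, we need $\rho$ and $\bar\rho$ to be equivalent. Since $\chi_\rho$ is real, $\chi_{\bar\rho}=\overline{\chi_\rho}=\chi_\rho$. Irreducible $\SL(2,\CC)$-representations with the same character are conjugate (the standard Culler--Shalen fact). Also, $\bar\rho$ is irreducible: an invariant line for $\bar\rho$ would conjugate to an invariant line for $\rho$. So $\bar\rho=g\rho g^{-1}$ for some $g\in\SL(2,\CC)$.

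Conjugate representations have equal volume. Indeed, if $f$ is $\rho$-equivariant, then $g\circ f$ is $g\rho g^{-1}$-equivariant, and $g$ preserves $\mathrm{vol}_{\HH^3}$. Therefore
\[
\operatorname{Vol}(Y,\rho)=\operatorname{Vol}(Y,\bar\rho)=-\operatorname{Vol}(Y,\rho),
\]
which forces $\operatorname{Vol}(Y,\rho)=0$.

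The only delicate points are the sign convention, namely that $r$ reverses orientation, and invoking the character-determines-representation result for irreducibles. Neither is a real obstacle.
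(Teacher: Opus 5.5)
Your proposal is correct and matches the paper's proof essentially step for step. The paper also uses the orientation-reversing reflection $(z,t)\mapsto(\bar z,t)$ with $c(A\cdot x)=\overline{A}\cdot c(x)$, so that $c\circ f$ is $\bar\rho$-equivariant and the volume form changes sign. It then uses that irreducible representations with equal characters are equivalent and hence have equal volume. You additionally justify points the paper leaves implicit: the intertwining identity via the boundary action, the irreducibility of $\bar\rho$, and the conjugation invariance of the volume.
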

\begin{proof} Let $c\colon\mathbb H^3\to\mathbb H^3$ be the orientation-reversing isometry defined by the complex conjugation on the first component $\mathbb{C} \times \mathbb{R}_{>0} =\mathbb H^3$. 
It satisfies 
\[ 
c(A\cdot x)=\overline{A}\cdot c(x) 
\] 
for $A\in\SL(2,\mathbb C)$, where the action of $\SL(2,\mathbb C)$ on $\mathbb H^3$ is understood through its quotient $\PSL(2,\mathbb C)$. 
If $f\colon\widetilde Y\to\mathbb H^3$ is a $\rho$-equivariant map, then $c\circ f$ is $\overline{\rho}$-equivariant. 
Since $c$ reverses the orientation of $\mathbb H^3$, 
\[ 
c^*\mathrm{vol}_{\mathbb H^3} = -\mathrm{vol}_{\mathbb H^3}. 
\] 
This shows the first equality. 

Now suppose that $\rho$ has real character. 
Then $\chi_{\bar{\rho}} = \overline{\chi_\rho} = \chi_\rho$. 
Since $\rho$ is irreducible, $\bar{\rho}$ is equivalent to $\rho$ in $\SL(2,\mathbb C)$. 
Since equivalent representations have the same volume, 
\[ 
\operatorname{Vol}(Y,\rho) = \operatorname{Vol}(Y,\bar{\rho}) = -\operatorname{Vol}(Y,\rho), 
\] 
and hence $\operatorname{Vol}(Y,\rho)=0$. 
\end{proof}

A standard Stokes argument proves the following.

\begin{lemma}
\label{lem:volume-cobordism}
Let $W$ be a compact oriented $4$-dimensional cobordism with $\partial W=-Y\sqcup Z$,
and let $\tilde{\rho}\colon
\pi_1(W)\to\SL(2,\mathbb C)$
be a representation. Denote its restrictions to the boundary
components by $\rho_Y$ and $\rho_Z$. 
Then,
\[
\operatorname{Vol}(Y,\rho_Y)
=
\operatorname{Vol}(Z,\rho_Z).
\]
\end{lemma}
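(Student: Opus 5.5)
We prove Lemma~\ref{lem:volume-cobordism} by extending the equivariant map defining the volume form over all of $W$ and then applying Stokes' theorem. Let $\bar\rho\colon\pi_1(W)\to\PSL(2,\CC)$ be the composite of $\tilde\rho$ with the quotient map. Since $\HH^3$ is contractible, there is a $\pi_1(W)$-equivariant smooth map $F\colon\widetilde W\to\HH^3$ from the universal cover of $W$. Equivalently, $F$ is a smooth section of the flat $\HH^3$-bundle $E_{\bar\rho}=\widetilde W\times_{\bar\rho}\HH^3\to W$, which exists because the fibre is contractible. The pullback $F^*\mathrm{vol}_{\HH^3}$ is $\pi_1(W)$-invariant, so it descends to a $3$-form $\Omega$ on $W$. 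It is closed, because $\mathrm{vol}_{\HH^3}$ is a top-degree form on a $3$-manifold and hence $d\,\mathrm{vol}_{\HH^3}=0$. Stokes' theorem then gives
\[
0=\int_W d\Omega=\int_{\partial W}\Omega=\int_Z\Omega-\int_Y\Omega .
\]

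The remaining work is to identify $\int_Y\Omega$ with $\operatorname{Vol}(Y,\rho_Y)$, and likewise for $Z$. Fix a basepoint and a path to each boundary component, so that the inclusion $Y\hookrightarrow W$ induces $\pi_1(Y)\to\pi_1(W)$ and $\rho_Y$ is the composite. The preimage of $Y$ in $\widetilde W$ is a disjoint union of covers of $Y$. Choose the component $\widehat Y$ determined by the chosen path. It is the cover of $Y$ associated to the kernel of $\pi_1(Y)\to\pi_1(W)$, and so there is a covering map $\widetilde Y\to\widehat Y$. Composing with $F$ gives a $\pi_1(Y)$-equivariant map $\widetilde Y\to\HH^3$, where $\pi_1(Y)$ acts through $\rho_Y$. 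Its pulled-back volume form descends on $Y$ to exactly $\Omega|_Y$. By the independence of the volume from the choice of equivariant map (stated after Definition~\ref{def:volume}), we get $\int_Y\Omega=\operatorname{Vol}(Y,\rho_Y)$, and similarly for $Z$.

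The main obstacle is bookkeeping rather than substance.
\begin{enumerate}
\item The restriction map $\pi_1(\partial\text{-component})\to\pi_1(W)$ need not be injective, and it depends on the chosen paths only up to conjugation. Equivalent representations have the same volume, so this ambiguity is harmless.
\item The orientation convention $\partial W=-Y\sqcup Z$ produces the minus sign on the $Y$ term in the Stokes computation above, which is exactly what yields equality rather than a sum.
\item The passage from $\SL(2,\CC)$ to $\PSL(2,\CC)$ is by definition, as in the preceding remark.
\end{enumerate}
Once these three points are checked, the equality follows.
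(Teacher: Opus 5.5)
Your proposal is correct and matches the paper's proof: a $\tilde\rho$-equivariant map $\widetilde W\to\HH^3$, descent of the pulled-back volume form to a closed $3$-form on $W$, and Stokes' theorem with the orientation convention $\partial W=-Y\sqcup Z$. Your extra step identifying $\Omega|_Y$ with the volume form of $\rho_Y$, via the component $\widehat Y$ of the preimage of $Y$ and the covering $\widetilde Y\to\widehat Y$, spells out a point the paper leaves implicit.
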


\begin{proof}
Let $\widetilde W$ be the universal cover of $W$. 
Since $\mathbb H^3$ is contractible, there exists a $\tilde{\rho}$-equivariant map $f\colon \widetilde W\to\mathbb H^3$. 
The $3$-form $f^*\mathrm{vol}_{\mathbb H^3}$ is $\pi_1(W)$-invariant and
therefore descends to a $3$-form $\omega_{\tilde{\rho}}$ on $W$.
Since the volume form $\mathrm{vol}_{\HH^3}$ is closed, so is
$\omega_{\tilde{\rho}}$. Hence, by Stokes' theorem,
\[
0=\int_W d\omega_{\tilde{\rho}}
=\int_{\partial W}\omega_{\tilde{\rho}}
=-\operatorname{Vol}(Y,\rho_Y)
+\operatorname{Vol}(Z,\rho_Z).
\]
This proves the assertion.
\end{proof}

\section{Results from filtered instanton Floer theory}

The purpose of this section is to prove the following theorem stated in Section~\ref{sec:Introduction}. \volumeinvariance* 

We first recall the following standard consequence of Hilbert's weak Nullstellensatz (see, for example, \cite[Theorem~4.19]{Eis95}).

\begin{lemma}\label{lem:weak-nullstellensatz} Let $k$ be an algebraically closed field and let $f_1,\ldots,f_r\in k[x_1,\ldots,x_N]$. If the system 
\[ f_1(x)=\cdots=f_r(x)=0 \] has a solution in $K^N$ for some field extension $K/k$, then it has a solution in $k^N$.
\end{lemma}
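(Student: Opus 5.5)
The plan is to pass from the given solution over $K$ to a statement about the ideal $I=(f_1,\ldots,f_r)\subset k[x_1,\ldots,x_N]$, and then to produce a $k$-point from a maximal ideal containing $I$.

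\emph{Step 1: $I$ is a proper ideal.} Let $a\in K^N$ be a common zero of the $f_i$. Suppose $1\in I$, so that $1=\sum_i g_i f_i$ for some $g_i\in k[x_1,\ldots,x_N]$. This is an identity of polynomials with coefficients in $k\subset K$, so we may evaluate it at $a$. The result is $1=\sum_i g_i(a)f_i(a)=0$ in $K$, which is a contradiction. Hence $I\neq k[x_1,\ldots,x_N]$.

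\emph{Step 2: choose a maximal ideal.} By Zorn's lemma, or simply because $k[x_1,\ldots,x_N]$ is Noetherian, there is a maximal ideal $\mathfrak m$ with $I\subset\mathfrak m$.

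\emph{Step 3: identify the residue field.} The quotient $L=k[x_1,\ldots,x_N]/\mathfrak m$ is a field and is finitely generated as a $k$-algebra. Zariski's lemma says that such a field is a finite, hence algebraic, extension of $k$. Since $k$ is algebraically closed, the structure map $k\to L$ is an isomorphism.

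\emph{Step 4: read off the point.} Let $b_j\in k$ be the element corresponding to the image of $x_j$ in $L\cong k$. Then $x_j-b_j\in\mathfrak m$ for every $j$, so the evaluation map $k[x_1,\ldots,x_N]\to k$, $p\mapsto p(b)$, coincides with the quotient map to $L\cong k$. Each $f_i$ lies in $I\subset\mathfrak m$, so $f_i(b)=0$. Thus $b=(b_1,\ldots,b_N)\in k^N$ is the required solution.

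The only substantial input is Zariski's lemma in Step 3. This is exactly the algebraic content of the weak Nullstellensatz, and I would quote it from \cite[Theorem~4.19]{Eis95} rather than reprove it. If a self-contained argument were wanted, I would first try the standard route via Noether normalization. Alternatively, when $k$ is uncountable (for instance $k=\CC$, the case used later), $L$ has countable $k$-dimension, while for any transcendental $t$ the elements $1/(t-c)$ with $c\in k$ are $k$-linearly independent. Hence every element of $L$ is algebraic over $k$. Steps 1, 2 and 4 are routine.
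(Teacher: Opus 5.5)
Your proof is correct and follows the route the paper intends: the paper gives no argument of its own and simply cites the weak Nullstellensatz \cite[Theorem~4.19]{Eis95}, which is the Zariski-lemma input of your Step~3, and your Steps~1, 2 and~4 are the routine reduction to it. One correction to your closing aside: the lemma is applied later with $k=\overline{\mathbb Q}$ and $K=\mathbb C$ (not $k=\mathbb C$), so $k$ is countable there and your cardinality shortcut does not cover the case actually needed; you need Zariski's lemma (or Noether normalization) in full generality, as you quote it.
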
 

\begin{proposition} \label{prop:real-algebraic-transfer} 
Let $W$ be a compact connected oriented cobordism from a closed oriented $3$-manifold $Y$ to another $Z$, and suppose that there exists a representation $\tilde{\rho}\colon \pi_1(W)\to \SL(2,\mathbb C)$. 
Let $\rho_Y = \tilde{\rho}|_{\pi_1(Y)}$. Suppose that $\rho_Y$ is algebraic, that is, there exist a number field $F\subset\mathbb C$ and an element $g\in\SL(2,\mathbb C)$ such that $\rho_F = g\rho_Yg^{-1}$ has its image in $\SL(2,F)$. 
Then there exist a finite extension $L/F$ and a representation 
\[ 
\tilde{\rho}_L\colon \pi_1(W)\to\SL(2,L) 
\] 
satisfying $ \tilde{\rho}_L|_{\pi_1(Y)}=\rho_F$.
\end{proposition}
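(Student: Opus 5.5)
Replace $\tilde\rho$ by $g\tilde\rho g^{-1}$. This is still a representation of $\pi_1(W)$ into $\SL(2,\mathbb C)$, and its restriction to $\pi_1(Y)$ is exactly $\rho_F$. So we may assume from the start that $\tilde\rho|_{\pi_1(Y)}=\rho_F$ takes values in $\SL(2,F)$. The plan is to turn the existence of such an extension into the solvability of a polynomial system with coefficients in $\overline{\mathbb Q}$. We then apply Lemma~\ref{lem:weak-nullstellensatz} with $k=\overline{\mathbb Q}$ and $K=\mathbb C$.

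\textbf{Setting up the polynomial system.} Since $W$ is compact, $\pi_1(W)$ is finitely presented. Choose a presentation $\langle x_1,\dots,x_n\mid R_1,\dots,R_m\rangle$. Choose also finitely many generators $y_1,\dots,y_p$ of $\pi_1(Y)$, and write their images under the inclusion-induced map as words $w_j(x_1,\dots,x_n)$; this map need not be injective, but that does not matter. Introduce $4n$ unknowns, the entries of matrices $X_i=\begin{pmatrix} a_i & b_i\\ c_i & d_i\end{pmatrix}$, and impose the following equations:
\begin{itemize}
\item $a_id_i-b_ic_i=1$ for each $i$;
\item the entrywise matrix equations $R_k(X_1,\dots,X_n)=I$ for each relator;
\item $w_j(X_1,\dots,X_n)=\rho_F(y_j)$ for each $j$.
\end{itemize}
Inverses in the words are polynomial because $X_i^{-1}=\begin{pmatrix} d_i&-b_i\\-c_i&a_i\end{pmatrix}$ once the determinant equation holds. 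Every coefficient lies in $F\subset\overline{\mathbb Q}$. A solution is the same thing as a representation $\pi_1(W)\to\SL(2)$ whose restriction to $\pi_1(Y)$ equals $\rho_F$, because agreement on generators forces agreement on all of $\pi_1(Y)$. The tuple $(\tilde\rho(x_i))_i$ is a solution in $\mathbb C^{4n}$.

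\textbf{Finding an algebraic solution.} By Lemma~\ref{lem:weak-nullstellensatz}, applied to the extension $\mathbb C/\overline{\mathbb Q}$, there is a solution in $\overline{\mathbb Q}^{4n}$. Let $L$ be the field generated over $F$ by its finitely many coordinates. Then $L/F$ is a finite extension, since it is generated by finitely many algebraic elements. This solution defines $\tilde\rho_L\colon\pi_1(W)\to\SL(2,L)$ with $\tilde\rho_L|_{\pi_1(Y)}=\rho_F$. Here $L$ is viewed inside $\mathbb C$ via a fixed embedding $\overline{\mathbb Q}\subset\mathbb C$ extending $F\subset\mathbb C$.

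\textbf{Main obstacle.} The only delicate point is making sure the constraint is genuinely polynomial over $\overline{\mathbb Q}$. This is why we first conjugate by $g$, so that the boundary condition is an equality with a fixed $F$-valued representation rather than ``equal up to conjugation.'' The finite presentability of $\pi_1(W)$, which follows from compactness, is what reduces the problem to finitely many variables and equations.
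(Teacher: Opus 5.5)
Your proposal is correct and follows the paper's proof essentially step for step. You conjugate by $g$ and encode extensions of $\rho_F$ as a finite polynomial system with coefficients in $F\subset\overline{\mathbb Q}$, using a finite presentation of $\pi_1(W)$ and words for generators of $\pi_1(Y)$. You then apply the weak Nullstellensatz to $\overline{\mathbb Q}\subset\mathbb C$ and take $L$ to be $F$ with the finitely many resulting matrix entries adjoined.
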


\begin{proof}
By assumption,
$g\tilde{\rho}g^{-1}\colon \pi_1(W)\to\SL(2,\mathbb C)$
is a representation whose restriction to $\pi_1(Y)$ is $\rho_F$.
Choose a finite presentation
$\pi_1(W)=\langle x_1, \ldots, x_n \mid R_1, \ldots, R_m\rangle$.
Let $y_1, \ldots, y_k$ be generators of $\pi_1(Y)$, and write $w_j(x_1, \ldots, x_n)$ for their images under the inclusion-induced homomorphism
$\pi_1(Y) \to \pi_1(W)$.

For each $i$, we introduce a variable 
\[
 X_i=
\begin{pmatrix}
a_i&b_i\\
c_i&d_i
\end{pmatrix} \in M(2,\mathbb C).
\]
The condition that the $X_i$'s define an $\SL(2,\mathbb C)$-representation
of $\pi_1(W)$ whose restriction to $\pi_1(Y)$ is $\rho_F$ is written as 
\[
\det X_i=1,\ R_\ell(X_1, \ldots, X_n) = I_2,  \text{ and } 
w_j(X_1, \ldots, X_n) = \rho_F(y_j).
\]
This condition forms a finite system of polynomial equations for the entries of the matrix $X_i$ with coefficients in $F$, and hence in $\overline{\mathbb Q}$. 
Indeed, since $\det X_i=1$, the inverse $X_i^{-1}$ occurring in
the words $R_\ell$ and $w_j$ is written as a polynomial in the entries of
$X_i$.

The representation 
\[
g\tilde{\rho}(x_i)g^{-1}=X_i
\]
gives a solution of this system over $\mathbb C$. 
Since $\overline{\mathbb Q}$ is algebraically
closed, Lemma~\ref{lem:weak-nullstellensatz}, applied to the field
extension $\overline{\mathbb Q}\subset\mathbb C$, shows that the system has a solution over $\overline{\mathbb Q}$. 
Hence, there exists a representation
$\tilde{\rho}_{\mathrm{alg}}\colon \pi_1(W)\to\SL(2,\overline{\mathbb Q})$ satisfying $\tilde{\rho}_{\mathrm{alg}}|_{\pi_1(Y)}=\rho_F$.

Only finitely many algebraic numbers occur among the entries of the $n$ matrices $\tilde{\rho}_{\mathrm{alg}}(x_1), \ldots, \tilde{\rho}_{\mathrm{alg}}(x_n)$.
Let $L$ be the field obtained from $F$ by adjoining these algebraic numbers.
Then $L/\mathbb Q$ is a finite extension, and hence $L/F$ is finite.
Moreover, $\tilde{\rho}_{\mathrm{alg}}(\pi_1(W))\subset\SL(2,L)$.
Setting $\tilde{\rho}_L = \tilde{\rho}_{\mathrm{alg}}$ completes the proof.
\end{proof}

\subsection{Filtered instanton Floer theory} \label{section: filtered instanton Floer}

In this subsection, we recall a part of filtered instanton Floer theory which will be used in the proof of Theorem~\ref{thm:invariance-volume}.
We follow the convention of \cite{NST24}. 
We only use the invariant $r_0$, and all instanton Floer groups are taken with coefficients in $\mathbb{Q}$. 
For details, see \cite{NST24}. 

Let $Y$ be an oriented integral homology $3$-sphere. 
We denote by $P_Y=Y\times \SU(2)$ the product $\SU(2)$-bundle and by $\theta$ the product connection. 
Let $\mathcal A^*(Y)$ be the space of irreducible $\SU(2)$-connections on $P_Y$ with a certain Sobolev completion, and let
\[
\mathcal G_0(Y)
=
\{g\colon Y\to \SU(2)\mid \deg(g)=0\}
\]
again with a completion. 
We put $\widetilde{\mathcal B}(Y) = \mathcal A^*(Y)/\mathcal G_0(Y)$.
The Chern--Simons functional
\[
\operatorname{cs}_Y\colon
\widetilde{\mathcal B}(Y)\to \mathbb{R}
\]
is given by
\[
\operatorname{cs}_Y(a)
=
\frac{1}{8\pi^2}
\int_Y
\operatorname{Tr}
\left(
a\wedge da+\frac{2}{3}a\wedge a\wedge a
\right).
\]

Let $\widetilde{\mathcal R}^*(Y)
=
\mathcal A_{\mathrm{flat}}(Y)/\mathcal G_0(Y)$, where $\mathcal A_{\mathrm{flat}}(Y)$ is the space of irreducible flat $\SU(2)$-connections with completion. 
We write
\[
\Lambda_Y^{*}
=
\operatorname{cs}_Y
\left(
\widetilde{\mathcal R}^{*}(Y)
\right)
\subset \mathbb{R}
\]
for the set of irreducible Chern--Simons critical values. 
The group $\mathbb{Z}$ acts on the set of critical values by translation, and $\Lambda_Y^{*}/ \mathbb{Z} $ becomes a finite subset of $\mathbb{R}/\mathbb{Z} =S^1$. 
Note that $\Lambda_Y^{*}$ is a topological invariant of $Y$.

Also, we define 
\[
{\mathcal R}^{*}(Y) : = \widetilde{\mathcal R}^{*}(Y)/ \mathbb{Z} \cong \mathcal{A}_{\mathrm{flat}}(Y) / \{ g \colon Y \to \SU(2) \}, 
\]
where $\mathbb{Z} =\{ g \colon Y \to \SU(2) \}  / \mathcal G_0(Y) $. 
Then, ${\mathcal R}^{*}(Y)$ is identified with the irreducible part of the $\SU(2)$-representation variety
\[
\bigl(\mathrm{Hom} (\pi_1(Y), \SU(2)) \setminus \{ [\theta]\} \bigr)/ \SU(2) 
\]
of $Y$, where $\theta$ denotes the trivial representation. 

For a sufficiently small non-degenerate regular holonomy perturbation
$\pi$, let
\[
\operatorname{cs}_{Y,\pi}
\colon
\widetilde{\mathcal B}(Y)\to\mathbb{R}
\]
denote the perturbed Chern--Simons functional, and let $
\widetilde{\mathcal R}^{*}_{\pi}(Y)$
be its irreducible critical set. Each
$a\in\widetilde{\mathcal R}^{*}_{\pi}(Y)$ is equipped with its
integer-valued Floer index
$\operatorname{ind}(a)\in\mathbb Z$.

Following \cite{NST24}, for a positive regular value $r$, we consider
the Chern--Simons filtered instanton chain complex $CI^{[0,r]}_*(Y;\pi)$.
Here, the notation $[0,r]$ is understood with the convention of
\cite{NST24} near the reducible critical value $0$. 
In particular, the lower endpoint is chosen slightly below $0$ so that the trivial connection is not a generator of the irreducible Floer complex.

There is a cochain
\[
\theta_Y^{[0,r]}
\colon
CI^{[0,r]}_1(Y;\pi)
\to
\mathbb Q
\]
defined by
$
\theta_Y^{[0,r]}(a)
=
\#\left(
\mathcal M_Y(a,\theta)_\pi/\mathbb R
\right)$,
where $\mathcal M_Y(a,\theta)_\pi$ denotes the 1-dimensional part of the moduli space of perturbed ASD connections on $Y\times\mathbb R$
converging to $a$ and $\theta$ at the two ends with respect to a product Riemann metric.  This is a filtered version of Donaldson's $D_1$-map introduced in \cite{Don02}. 
The corresponding
cohomology class
$
[\theta_Y^{[0,r]}]
\in
I^1_{[0,r]}(Y;\mathbb Q)$
is independent of the auxiliary choices.

The invariant $r_0(Y)$ is defined by
\[
r_0(Y)
=
\sup
\left\{
r>0
\;\middle|\;
[\theta_Y^{[0,r]}]=0
\right\} \in (0, \infty ].
\]
More precisely, the supremum is taken over admissible regular values
$r$, as in \cite[Definition~3.2]{NST24}. One of the basic properties of
$r_0$ is that if
$
r_0(Y)<\infty$, then
$r_0(Y)\in\Lambda_Y^{*}$.
That is, a finite value of $r_0(Y)$ is realized as the Chern--Simons value
of an irreducible flat $\SU(2)$-connection.

\begin{definition}
Let $Y$ be an oriented integral homology $3$-sphere and let $
\rho\colon \pi_1(Y)\to \SU(2)$
be an irreducible representation. We say that $\rho$ \emph{uniquely
realizes} the $r_0$-invariant if
\[
\operatorname{cs}_Y(\tilde{\rho})=r_0(Y)
 \text{ and }
\left\{
[\tilde{\alpha}]\in\widetilde{\mathcal R}^*(Y)
\;\middle|\;
\operatorname{cs}_Y(\tilde{\alpha})=r_0(Y)
\right\}
=
\{[\tilde{\rho}]\}, 
\]
where $\tilde{\rho} \in \widetilde{\mathcal{R}}^*(Y)$ denotes a lift of $\rho$ with respect to the covering map 
\[
\widetilde{\mathcal{R}}^*(Y) \to {\mathcal{R}}^*(Y). 
\]
\end{definition}

\begin{proposition}
\label{prop:SU2-extension}
Let $W$ be an integral homology cobordism from an oriented integral
homology $3$-sphere $Y$ to another $Z$. 
Suppose that $\rho\colon \pi_1(Y)\to \SU(2)$
is an irreducible representation which uniquely realizes $r_0(Y)$.
Then there exists a representation $
\tilde{\rho}\colon \pi_1(W)\to \SU(2)$
such that $\tilde{\rho}|_{\pi_1(Y)}$ is equivalent to $\rho$.
\end{proposition}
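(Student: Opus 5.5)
The plan is to argue by contradiction using the cobordism map in filtered instanton Floer theory together with the invariance of $r_0$ under homology cobordism. Suppose that no representation of $\pi_1(W)$ into $\SU(2)$ restricts on $Y$ to something equivalent to $\rho$. Equip $W$ with a metric that is cylindrical near the ends and consider the filtered cobordism map
\[
CI^{[0,r]}_*(Y;\pi)\to CI^{[0,r]}_*(Z;\pi'),
\]
defined by counting perturbed ASD connections on $W$ with cylindrical ends. From \cite{NST24} we take two inputs. First, the cobordism map carries the class $[\theta_Z^{[0,r]}]$ to $[\theta_Y^{[0,r]}]$, since $W$ is a homology cobordism and the reducible is unobstructed. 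Second, the map does not increase the Chern--Simons filtration. Together these give the inequality $r_0(Y)\ge r_0(Z)$ (and the reverse inequality by turning $W$ around), which is the invariance already cited in the paper.

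The key step is a refinement: I would show that if no flat $\SU(2)$-connection on $W$ restricts to $\rho$, then the cobordism map can be arranged to strictly lower the filtration near the level $r_0(Y)$. Concretely, the aim is to show that for small $\epsilon>0$ the map sends $CI^{[0,r_0(Y)+\epsilon]}(Y)$ into $CI^{[0,r_0(Y)-\epsilon]}(Z)$ up to chain homotopy.

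The energy of an instanton on $W$ from $a$ to $b$ is $\operatorname{cs}(a)-\operatorname{cs}(b)\ge0$. Energy zero forces the instanton to be flat, hence a flat connection on $W$ restricting to $a$ and $b$. By the uniqueness hypothesis, $\rho$ is the only critical point of $Y$ at level $r_0(Y)$. If no flat connection on $W$ extends $\rho$, then a compactness argument (Uhlenbeck compactness with small holonomy perturbations, letting the perturbations tend to zero) produces a uniform energy gap $\delta>0$ for trajectories starting near $\rho$. Trajectories starting at other generators with level in $(r_0(Y)-\epsilon, r_0(Y)+\epsilon)$ do not exist once $\epsilon$ is small, because $\Lambda_Y^*$ is discrete.

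From this gap we conclude as follows. The pulled-back class $[\theta_Y^{[0,r]}]$ for $r$ slightly above $r_0(Y)$ is nonzero by the definition of $r_0(Y)$. It factors through the filtered group of $Z$ at a level strictly below $r_0(Y)$. Composing with the reverse cobordism $\overline{W}$ from $Z$ to $Y$, which is non-increasing in filtration, and using that $\overline{W}\circ W$ induces a map homotopic to the identity on $[\theta]$ (it is a homology cobordism, so the $D_1$-type classes are preserved), we would get $[\theta_Y^{[0,r_0(Y)-\epsilon']}]\ne0$. This contradicts the definition of $r_0(Y)$ as a supremum.

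Alternatively, and more simply, one can avoid composing: $r_0(Z)=r_0(Y)$, and the strict drop shows $[\theta_Z]$ is nonzero at a level below $r_0(Z)$, which is again a contradiction. I expect the main obstacle to be the compactness and energy-gap step. It requires controlling perturbations on $W$ and showing that the chain-level filtration shift survives, together with the bookkeeping of $\widetilde{\mathcal R}^*$ lifts (degree-zero gauge) so that "level $r_0$" is well defined on both ends. A flat limit on $W$ would be a representation $\pi_1(W)\to\SU(2)$ restricting to $\rho$ up to conjugacy. It cannot restrict to the trivial representation, since $\rho$ is irreducible.
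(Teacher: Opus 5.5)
Your proposal is correct in outline. It differs from the paper's proof mainly in what is proved versus what is quoted.

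The paper argues in three short steps:
\begin{itemize}
\item Homology cobordism invariance gives $r_0(Z)=r_0(Y)=r<\infty$.
\item The equality-case result \cite[Theorem~3.7]{NST24} then supplies a representation $\tilde{\rho}\colon\pi_1(W)\to\SU(2)$ whose restrictions to $Y$ and $Z$ are irreducible. The proof of that theorem shows both restrictions have Chern--Simons value $r$.
\item Uniqueness forces the restriction to $Y$ to be equivalent to $\rho$.
\end{itemize}

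You instead re-derive the equality-case statement directly for $\rho$. Assuming $\rho$ does not extend, you get an energy gap from compactness as the perturbations tend to zero. This gives a filtration drop $CI^{[0,r+\epsilon]}(Y)\to CI^{[0,r-\epsilon]}(Z)$. Then $[\theta_Y^{[0,r+\epsilon]}]$ would be pulled back from $[\theta_Z^{[0,r-\epsilon]}]=0$, contradicting the definition of $r_0(Y)$. This is the same mechanism that underlies the cited theorem. The difference is that uniqueness enters earlier, to guarantee that the only generators near level $r$ are perturbations of $\rho$.

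Your route buys self-containedness. It also makes explicit the level-$r$ information that the paper only says ``follows from the proof'' of \cite[Theorem~3.7]{NST24}. The paper's route buys brevity, since the analytic points you flag as the main obstacle are already handled in \cite{NST24}. These are uniformly small perturbations on $Y$, $Z$ and $W$, control of bubbling and breaking in the small-energy limit, and the filtered chain-level identity relating $\theta_Y$ and $\theta_Z$.

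Of your two endings, the direct one (contradicting $r_0(Z)=r_0(Y)$) is the right one. The detour through $\overline{W}$ is unnecessary. In particular, you do not need the claim that $\overline{W}\circ W$ acts as the identity on $[\theta]$ up to homotopy, since each homology cobordism already pulls $[\theta]$ back to $[\theta]$.
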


\begin{proof}
Let $r=r_0(Y)$.
Since $W$ is an integral homology cobordism, the homology cobordism
invariance of $r_0$ gives $r_0(Z) = r < \infty $.
Then, by \cite[Theorem~3.7]{NST24} (see also \cite[Theorem~3.10]{NST24} and \cite[Section~6.1]{ADMT23}), we obtain irreducible representations $\rho_Y'\colon\pi_1(Y)\to \SU(2)$ and $\rho_Z\colon\pi_1(Z)\to \SU(2)$
which are restrictions of a common representation $\tilde{\rho}\colon\pi_1(W)\to \SU(2)$.
Moreover, as follows from the proof of
\cite[Theorem~3.7]{NST24}, these representations satisfy
\[
\operatorname{cs}_Y(\rho_Y')
=
\operatorname{cs}_Z(\rho_Z)
=
r.
\]

Since $\rho$ uniquely realizes $r_0(Y)$, we have $[\rho_Y']=[\rho]$.
Thus, after conjugating $\tilde{\rho}$ by an element of $\SU(2)$,
we may assume that $
\tilde{\rho}|_{\pi_1(Y)}=\rho$. 
This proves the proposition.
\end{proof}

\begin{remark}
    The argument showing extendability of $\SU(2)$-representations given in Proposition~\ref{prop:SU2-extension} has also been developed in \cite[Section~8]{Tan22} under more general assumptions on $\SU(2)$-representation varieties. 
    See also \cite{ADHLP22} for related arguments. 
\end{remark}

Now, we give the proof of Theorem~\ref{thm:invariance-volume}. 

\begin{proof}[Proof of Theorem~\ref{thm:invariance-volume}]
Let $W$ be an integral homology cobordism from $Y$ to $Z$. 
By Propositions~\ref{prop:SU2-extension} and~\ref{prop:real-algebraic-transfer}, after enlarging the number field if necessary, there is an algebraic representation $\tilde{\rho}_L\colon\pi_1(W)\to\SL(2,L)$ extending an algebraic model $\rho_F$ of $\rho$. 
Let $\sigma\colon F\hookrightarrow\CC$ be an embedding for which $\rho_F^\sigma$ is equivalent to an $\SL(2,\CC)$ lift of the holonomy representation of $Y$.
Since $L/F$ is an algebraic extension and $\CC$ is algebraically closed, one can extend $\sigma$ to an embedding $\tilde{\sigma}\colon L\hookrightarrow\CC$. 
The representation $\tilde{\rho}_L^{\tilde{\sigma}}$ then restricts to $\rho_F^\sigma$ on $Y$. Denote its restriction to $Z$ by $\rho_Z$. 
By Lemma~\ref{lem:volume-cobordism} and Reznikov's volume inequality \cite[Section~5.1]{Rez96},
\[
\mathrm{Vol}_{\mathrm{hyp}}(Y)
=|\mathrm{Vol}(Y,\rho_F^\sigma)|
=|\mathrm{Vol}(Z,\rho_Z)|
\leq v_3\|Z\|.
\]
Since $Y$ is hyperbolic, $\operatorname{Vol}_{\mathrm{hyp}}(Y)=v_3\|Y\|$. 
Hence,
\[
\|Z\|\geq\|Y\|=\frac{\operatorname{Vol}_{\mathrm{hyp}}(Y)}{v_3}>0.
\]
This completes the proof.
\end{proof}

\begin{remark}\label{remark_inj}
Under the assumption of Theorem~\ref{thm:invariance-volume}, 
the representation
$
\rho\colon \pi_1(Y)\to \SU(2)$
uniquely realizing $r_0(Y)$ is faithful. Indeed, let
$\rho_F\colon\pi_1(Y)\to\SL(2,F)$ be an algebraic model of $\rho$.
By assumption, there is a field embedding
$\sigma\colon F\hookrightarrow\mathbb C$
such that the projectivization of $\rho_F^\sigma$ is equivalent to the holonomy representation of $Y$, which is faithful.
Thus, if $\rho(\gamma) = I_2$ for some $\gamma \in\pi_1(Y)$, then
$\rho_F^\sigma(\gamma) = I_2$, and hence $\gamma=1$.

Now let $W$ be any integral homology cobordism from $Y$ to an
integral homology $3$-sphere $Z$. 
By Proposition~\ref{prop:SU2-extension}, $\rho$ extends, up to equivalence,
to a representation
$\tilde{\rho}\colon\pi_1(W)\to\SU(2)$. 
If $\gamma \in\pi_1(Y)$ lies in the kernel of the inclusion-induced homomorphism
$\pi_1(Y) \to \pi_1(W)$, then $\rho(\gamma) = \tilde{\rho}(1) = I_2$.
Since $\rho$ is faithful, we have $\gamma=1$. 
Therefore, $\pi_1(Y) \to \pi_1(W)$
is injective.
\end{remark}

\section{Computations for a hyperbolic homology 3-sphere}
The goal of this section is to establish the following explicit computation. 
\maincomputation*

This follows from the combination of the following two propositions and one lemma. 
For $Y=S^3_{1/2}(5_2^*)$, we canonically identify $\pi_1(Y)$ with $\pi_1(-Y)$.

\begin{figure}[htbp]
    \centering
    \includegraphics[width=0.4\linewidth]{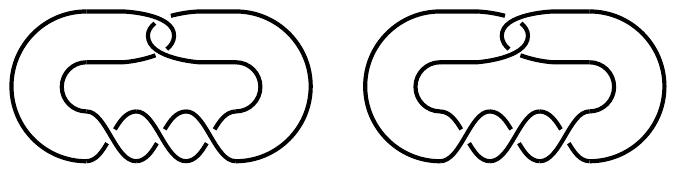}
    \caption{The knot $5_2$ and its mirror image $5_2^\ast$.}
    \label{fig:5_2}
\end{figure}

\begin{proposition}
\label{prop:main-computation-r0}
There exists an irreducible representation $\rho_1\colon \pi_1(Y)\to \SU(2)$, unique up to equivalence, such that $\operatorname{cs}_Y(\rho_1)=r_0(Y)$.
In particular, $\rho_1$ uniquely realizes the $r_0$-invariant.
\end{proposition}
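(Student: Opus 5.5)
We compute the $\SU(2)$-representation variety of $Y=S^3_{1/2}(5_2^*)$ explicitly, evaluate the Chern--Simons values of all irreducible representations, and then use the known computation of $r_0$ from \cite{NST24} (or the structural results there) to identify which level realizes $r_0(Y)$.

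\textbf{Step 1: the representation variety.} Since $5_2$ is a two-bridge knot, $\pi_1(S^3\setminus 5_2^*)$ has a two-generator, one-relator presentation $\langle x,y\mid wx=yw\rangle$ with meridians $x,y$. Up to conjugacy, every irreducible $\SU(2)$-representation can be normalized as
\[
x\mapsto\begin{pmatrix}e^{i\alpha}&0\\0&e^{-i\alpha}\end{pmatrix},\qquad
y\mapsto\begin{pmatrix}\cos\beta\, e^{i\alpha}& \sin\beta\\ -\sin\beta&\cos\beta\, e^{-i\alpha}\end{pmatrix}.
\]
The relation is then given by the Riley polynomial $\phi(s,u)=0$, where $s=2\cos\alpha$ and $u$ is a trace coordinate. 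The $1/2$-surgery condition says that $\mu\lambda^2=1$, with $\lambda$ the longitude. Because $\rho(\mu)$ and $\rho(\lambda)$ commute, this becomes the equation $e^{i\alpha}e^{2i\ell}=1$, where $e^{i\ell}$ is the longitude eigenvalue. That eigenvalue is a rational function of $(s,u)$. Eliminating variables by a resultant gives a one-variable polynomial in $\cos\alpha$. Its real roots in the allowed range give finitely many points. Each point should be nondegenerate, which can be checked via $H^1(Y;\mathrm{ad}\rho)=0$; this also matches Casson's invariant $\lambda(Y)=\tfrac12\cdot\tfrac12\Delta''(1)$ up to sign. The resulting irreducibles $\rho_1,\dots,\rho_k$ are therefore explicit algebraic representations.

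\textbf{Step 2: Chern--Simons values.} For each $\rho_j$, I would compute $\operatorname{cs}_Y(\rho_j)\in\RR/\ZZ$ by the surgery formula of Kirk--Klassen. One integrates $-2\int \beta\,d\alpha$ along a path in the character variety of the knot complement, running from an abelian (or known) representation to $\rho_j$, and then adds boundary correction terms in $\alpha,\ell$ determined by the slope $p/q=1/2$. The numerical values were already obtained in \cite{NST24}, and I would reuse them. The key output is that the values of $\Lambda_Y^*/\ZZ$ are pairwise distinct, and that exactly one lift lies at the level $r_0(Y)$.

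\textbf{Step 3: identifying $r_0$.} I would use the results of \cite{NST24} on $r_0$ for surgeries. Either their computation for $S^3_{1/2}(5_2^*)$ gives $r_0(Y)$ as a specific value in $\Lambda^*_Y$, or one argues via the chain complex. Since $r_0(Y)<\infty$ (for instance because $h$ or a negative-definite cobordism argument applies, as in \cite{NST24}), $r_0(Y)$ is the smallest positive level $r$ at which a generator $a$ of index $1$ with $\theta(a)\neq0$ becomes a cocycle that is not a coboundary. This $r_0(Y)$ is one specific element of $\Lambda^*_Y\cap(0,\infty)$.

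Uniqueness then reduces to two points. First, no two distinct irreducible classes share the same value mod $\ZZ$. Second, within $\widetilde{\mathcal R}^*(Y)$, the lifts of a single $\rho$ differ by integers, so only one lift has value exactly $r_0(Y)$. The main obstacle is rigor in Step 2. Distinguishing the Chern--Simons values requires certified numerical integration with interval error bounds, which is fine because separation only needs finite precision. Certifying Step 1's root count exactly, by Sturm sequences on the resultant, is routine by comparison.
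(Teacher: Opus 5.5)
Your proposal follows the paper's route. The paper's proof is just a citation of \cite[Theorem~1.18 and Section~7]{NST24}: there the irreducible $\SU(2)$-representations come from the Riley polynomial of $5_2$, their Chern--Simons values from the Kirk--Klassen formula, and $r_0(Y)\approx 0.0017649$ is shown to be attained by exactly one class; your Step~3 defers to that same computation, and your $\mathbb{Z}$-torsor argument for the lifts is correct.

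A few side details are off, though none affects the argument:
\begin{enumerate}[label=(\roman*)]
\item As written, your $\rho(y)$ has trace $2\cos\alpha\cos\beta$, so it is not conjugate to $\rho(x)$. The standard choice has diagonal entries $\cos\alpha\pm i\sin\alpha\cos\beta$ and off-diagonal entries $\pm\sin\alpha\sin\beta$.
\item Casson's formula gives $|\lambda(S^3_{1/2}(5_2^*))|=2\cdot\tfrac12\Delta_{5_2}''(1)=4$, not $\tfrac12\cdot\tfrac12\Delta''(1)$. The value $4$ matches the eight irreducibles $\rho_1,\dots,\rho_8$.
\item You only need that no \emph{other} class has Chern--Simons value congruent to $r_0(Y)$ modulo $\mathbb{Z}$. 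You do not need all levels in $\Lambda_Y^*/\mathbb{Z}$ to be pairwise distinct.
\end{enumerate}
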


\begin{proof}
This follows from the computation in \cite[Theorem~1.18 and Section~7]{NST24}.
There, the irreducible $\SU(2)$-representations of $\pi_1(Y)$ are described using the Riley polynomial of $5_2$, and their Chern--Simons values are computed using the Kirk--Klassen formula~\cite{KirkKlassen90}. 
The computation shows that
\[
r_0(Y) \approx 0.00176489047864885113
\]
and that exactly one equivalence class of irreducible $\SU(2)$-representations has this Chern--Simons value. 
Hence, this representation uniquely realizes $r_0(Y)$.
\end{proof}

We recall from \cite[Section~7]{NST24} the Riley polynomial and the $A$-polynomial.
Let $K$ be a $2$-bridge knot and let $E(K)$ denote the exterior of $K$.
Then the knot group $\pi_1(E(K))$ admits a presentation of the form $\langle x, y \mid wx = yw \rangle$, where $w$ is a certain word in $x$ and $y$.
For $M \in \CC\setminus\{0\}$ and $u \in \CC$, let $\rho_{M,u}$ denote the representation of the free group $\langle x, y \mid \text{--} \rangle$ of rank $2$ given by 
\[
\rho_{M,u}(x) = 
\begin{pmatrix}
 M & M^{-1} \\
 0 & M^{-1}
\end{pmatrix},
\quad
\rho_{M,u}(y) = 
\begin{pmatrix}
 M & 0 \\
 -Mu & M^{-1}
\end{pmatrix}.
\]
The \emph{Riley polynomial} of $K$ (for the above presentation) is defined by $\phi(M,u) = w_{11}+(1-M^2)w_{12} \in \ZZ[M^{\pm 1},u]$, where $w_{ij}$ is the $(i,j)$-entry of $\rho_{M,u}(w)$ (see \cite{Ril84}).
Note that the notation is changed from \cite{NST24} by $\varepsilon \sqrt{t} = M$. 
Then $\rho_{M,u}$ induces a representation of $\pi_1(E(K))$ if and only if $\phi(M,u)=0$.
Moreover, any irreducible representation of $\pi_1(E(K))$ is equivalent to $\rho_{M,u}$ for some $M$ and $u$.
For $K = 5_2$, we have $w = [y, x^{-1}]^2$ and 
\[
\phi(M,u)
=
-(M^4+M^{-4})u
+
(M^{-2}+M^2)(2+3u+2u^2)
-
(3+6u+3u^2+u^3).
\]

Let $\lambda$ and $\mu$ be the preferred oriented longitude and meridian of a knot $K$. 
For a representation $\rho \colon \pi_1(E(K)) \to \SL(2,\CC)$, up to equivalence, we may assume $\rho(\lambda)$ and $\rho(\mu)$ are upper triangular matrices since they commute.
The $(1,1)$-entries of these matrices are related via the \emph{$A$-polynomial} $A_K(L, M) \in \ZZ[L, M]$ of $K$ (see \cite{CCGLS94}).
At least for $2$-bridge knots, the vanishing of the $A$-polynomial imposes a necessary condition on eigenvalues $L$ and $M$ to arise from an irreducible representation of $\pi_1(E(K))$.
For $K= 5_2$, we have
\[
A_{5_2}(L, M) = -L^3 -M^{14} +L^2 (1 -2M^2 -2M^4 +M^8 -M^{10}) +LM^4 (-1 +M^2 -2M^6 -2M^8 +M^{10}).
\]

\begin{proposition}
\label{prop:main-computation-algebraic}
For a representation $\rho = \rho_{M,u} \colon \pi_1(Y) \to \SL(2,\CC)$, there exists a number field $F\subset\mathbb C$ such that the image of $\rho$ lies in $\SL(2,F)$.
\end{proposition}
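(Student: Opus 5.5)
The statement concerns representations of $\pi_1(Y)$ of the form $\rho_{M,u}$, i.e.\ representations of the knot group $\pi_1(E(5_2^*))$ (identified with that of $5_2$ up to orientation, as in the identification $\pi_1(Y)=\pi_1(-Y)$) that factor through the Dehn filling of slope $1/2$. Since $\rho_{M,u}$ has entries in $\QQ(M,u)$, namely $M^{\pm1}$, $u$ and products thereof, the plan is to show that $M$ and $u$ are algebraic numbers. Then $F=\QQ(M,u)$ is a number field and the image lies in $\SL(2,F)$, because $\rho_{M,u}(x)$ and $\rho_{M,u}(y)$ generate the image and have entries in $F$.

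The key step is to exhibit a zero-dimensional polynomial system with rational coefficients satisfied by $(M,u)$. The first equation is the Riley equation $\phi(M,u)=0$, which is needed for $\rho_{M,u}$ to factor through the knot group. The second is the surgery condition. For slope $1/2$, the filling relation is $\mu\lambda^2=1$ (with the orientation conventions for the mirror absorbed by the identification above, possibly changing $\lambda$ to $\lambda^{-1}$). Writing $\rho(\lambda)$ as a word in $x,y$, its $(1,1)$-entry is $L\in\ZZ[M^{\pm1},u]$, since the longitude of a $2$-bridge knot is an explicit word $\tilde w^{*}w$ and $\rho(\mu)=\rho(x)$ is upper triangular with eigenvalue $M$. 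Because $\rho(\lambda)$ commutes with $\rho(\mu)$, it is also upper triangular, so the filling condition reduces to the polynomial equation $ML^2=1$ in $M,u$, where $L=L(M,u)$. Since $M$ lies in $\CC\setminus\{0\}$, I will treat $M^{-1}$ as a variable $N$ with $MN=1$. This gives a system over $\QQ$.

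To conclude algebraicity, I plan to show that the solution set in $\CC^2$ is finite. A finite solution set of a system defined over $\QQ$ consists of points with coordinates in $\overline{\QQ}$: each coordinate satisfies a nonzero rational polynomial by elimination theory, and the Galois-orbit argument gives the same conclusion. One way to see finiteness is to use the $A$-polynomial. Irreducible characters on the one-dimensional Riley curve map to the curve $A_{5_2}(L,M)=0$, and this curve meets the curve $ML^2=1$ in finitely many points because $A_{5_2}$ has no factor of the form $ML^2-1$. This can be checked by substituting $L^2=M^{-1}$, with suitable signs, into $A_{5_2}$ and observing that the result is not identically zero. This bounds $M$ to finitely many algebraic values. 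For each such $M$, the equation $\phi(M,u)=0$ is a cubic in $u$ with leading coefficient $-1$, so $u$ is algebraic over $\QQ(M)$.

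The main obstacle I expect is bookkeeping rather than anything conceptual: identifying the correct longitude word and its $(1,1)$-entry, tracking signs and the mirror orientation so that the filling equation is written correctly, and verifying non-degeneracy, i.e.\ that the $1/2$-filling curve is not a component of the character variety. A cleaner way to avoid the longitude word entirely is to take the resultant $\operatorname{Res}_u(\phi, ML^2-1)$, or simply to appeal to the $A$-polynomial as above. In fact, algebraicity can be seen without explicit computation. $Y$ is a closed hyperbolic manifold, and for the representations in question (irreducible, factoring through $\pi_1(Y)$), the character variety of a closed $3$-manifold whose characters are isolated yields algebraic characters. For an irreducible character with algebraic traces, the explicit Riley normal form then has entries in $\QQ(M,u)$, where $M$ is an eigenvalue of $\rho(x)$ and $u$ is determined by $\tr\rho(xy^{-1})=2-u$ (up to normalization). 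Both are therefore algebraic. I would present the finiteness argument via the $A$-polynomial as the primary route.
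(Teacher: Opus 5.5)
Your primary route is correct and is essentially the paper's proof. The filling relation, read off the $(1,1)$-entries of the upper-triangular matrices $\rho(\mu)$ and $\rho(\lambda)$, gives $M=L^{2}$ (the paper's convention; your hedged $ML^{2}=1$ would also give a nonzero specialization of $A_{5_2}$), so $L$ and $M$ are algebraic, then $u$ is algebraic because $\phi(M,u)$ is a nonzero cubic in $u$, and $F=\QQ(M,u)$ works — your secondary ``isolated characters'' argument would need an independent proof that the character variety of $Y$ is zero-dimensional, but you do not rely on it.
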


\begin{proof}
Since $\rho$ factors through the $-1/2$-surgery, we have $\rho(\mu) \rho(\lambda)^{-2} = I_2$, and hence $M = L^2$.
Since $A_{5_2}(L, L^2)$ is a non-zero polynomial with integer coefficients, $L$ is algebraic over $\mathbb Q$, and so is $M$. 

The equation $\phi(M,u) = 0$ is a non-zero polynomial equation in $u$ with coefficients in $\mathbb Q(M)$. 
It follows that $u$ is an algebraic number.
Let $F = \mathbb Q(M, u)$.
Then, $F$ is a number field and the matrices $\rho(x)$ and $\rho(y)$ lie in $\SL(2,F)$. 
Thus, $\rho$ restricts to a representation $\pi_1(Y)\to \SL(2,F)$.
\end{proof}

\begin{table}[htbp]
\centering
$
\begin{array}{r|r|r|r|r}
 L+L^{-1} & M & u & \text{type} & |\mathrm{Vol}| \\\hline\hline
 1.61803 & 0.309017 +0.951057\, i & -1.00000 & \rho_5 & 0 \\
 -0.618034 & -0.809017 +0.587785\, i & -1.00000 & \rho_2  & 0  \\\hline
 -1.77462 & 0.574643 +0.818404\, i & -2.41421 & \rho_3 &  0  \\
 1.36041 & -0.0746432 +0.99721\, i & -2.41421 & \rho_8 &  0 \\
 0.185885 & -0.982723 +0.185081\, i & 0.414214 & \text{--} & 0 \\
 2.22833 & 2.57747 & 0.414214 & \text{--} & 0 \\\hline
 1.96292 & 0.926534 +0.37621\, i & -0.0755806 & \rho_1 & 0  \\
 -1.56046 & 0.217519 +0.976056\, i & -2.16991 & \rho_6 & 0  \\
 1.25785 & -0.208911 +0.977935\, i & -3.62043 & \rho_7 &  0  \\
 -1.15511 & -0.332861 +0.942976\, i & -1.6911 & \rho_4 & 0 \\
 0.2474 -0.404336\, i & -0.647653 +0.160567\, i & 0.278508 -0.958302\, i & \text{holonomy} & 2.2267179039 \\
 0.2474 +0.404336\, i & -1.45463 +0.360633\, i & 0.278508 +0.958302\, i & \text{holonomy} & 2.2267179039
\end{array}
$
\vspace*{5pt}
\caption{The $12$ irreducible $\SL(2,\CC)$-representations of $\pi_1(S^3_{-1/2}(5_2))$, where $\rho_i$'s are the $\SU(2)$-representations in \cite[Table~2]{NST24}.
These are obtained from $p_2$, $p_4$, and $p_6$ in this order.}
\label{tab:SL2reps}
\end{table}

Before proving Theorem~\ref{thm:main-computation}, we give a table of irreducible $\SL(2,\CC)$-representations of $\pi_1(-Y)$.
To obtain Table~\ref{tab:SL2reps}, we begin with
\[
A_{5_2}(L, L^2) = L^{14} (1+L)^3 p_2(L+L^{-1}) p_4(L+L^{-1}) p_6(L+L^{-1}),
\]
where
\[
p_2(x)= x^2-x-1, \quad p_4(x)= x^4-2 x^3-3 x^2+6 x-1, \quad p_6(x)= x^6-x^5-4 x^4+3 x^3+3 x^2-2 x+1.
\]
By Remark~\ref{rem:L+1} below, we ignore the factors $L$ and $1+L$.
Solving $p_i(x)=0$ numerically, we obtain $12$ candidates of $\tr \rho(\lambda) = L+L^{-1}$.
For each pair $(L_0, M_0)$ obtained from the candidates, we know $L_0, M_0 \notin \{-1,0,1\}$.
The $A$-polynomial of $5_2$ is obtained as the resultant of $\phi(M, u)$ and $L-\rho(\lambda)_{11}$ with respect to $u$, up to multiplication by a power of $M$.
The resultant description and that the Riley polynomial is monic in $u$ up to a sign ensure, by \cite[Theorem~3.1]{CoLo96}, the existence of a value $u_0$ such that the representation $\rho_{M_0,u_0}\colon \pi_1(E(K)) \to \SL(2,\CC)$ 
satisfies $(L_0, M_0) = (\rho_{M_0,u_0}(\lambda)_{11}, \rho_{M_0,u_0}(\mu)_{11})$.
(See \cite{KiNo22} for the argument based on the resultants,
or, alternatively, we can show the existence using the proof of \cite[Proposition~4.4]{CoLo96}.)
Since $L_0$ is not $\pm 1$, then the eigenvalues $L_0$ and $L_0^{-1}$ of $\rho_{M_0,u_0}(\lambda)$ are distinct, so $\rho_{M_0,u_0}(\lambda)$ and $\rho_{M_0,u_0}(\mu)$ are simultaneously diagonalizable, and the scalar filling relation $M_0 L_0^{-2} =1$ implies the matrix filling relation $\rho_{M_0,u_0}(\mu) \rho_{M_0,u_0}(\lambda)^{-2} =I_2$.
Note that the representations corresponding to $(L_0, M_0)$ are equivalent to those of $(L_0^{-1}, M_0^{-1})$.
Therefore, there are at least $12$ representations of $\pi_1(-Y)$.
On the other hand, among the three solutions of the equation $\phi(L_0^2, u)=0$ for $u$, we can numerically check that two of them do not satisfy $\rho(\mu)\rho(\lambda)^{-2} = I_2$.
Hence, there exist exactly $12$ representations as listed in Table~\ref{tab:SL2reps}.
Note that the reason why most volumes are zero will be explained in the proof of Theorem~\ref{thm:main-computation}.

\begin{remark}
\label{rem:L+1}
The case $L=0$ is incompatible with $\rho(\lambda) \in \SL(2,\CC)$.
When $L=-1$, the equations $A_{5_2}(L,M)=0$ and $L^2 = M$ have a unique solution $(L,M) = (-1,1)$.
Then $u$ must satisfy $1-2u+u^2-u^3 = 0$.
However, none of the three solutions satisfies $\rho(\mu) \rho(\lambda)^{-2} = I_2$.
\end{remark}

Finally, we give the proof of Theorem~\ref{thm:main-computation}. 

\begin{proof}[Proof of Theorem~\ref{thm:main-computation}]
First, recall that the representation $\rho_1$ in Proposition~\ref{prop:main-computation-r0} is algebraic over a number field $F$ by Proposition~\ref{prop:main-computation-algebraic}.
The roots of $p_2$ and $p_4$ are real, and $p_6$ has four real roots and a pair of complex roots.

Let $\rho$ be any representation corresponding to a real root.
Then, $\tr \rho(\lambda) = \tr \bar{\rho}(\lambda)$, where $\bar{\rho}$ is the complex conjugate of $\rho$.
Since the $12$ representations are characterized by the trace on $\lambda$, we conclude that $[\rho] = [\bar{\rho}]$.
Hence, the character of $\rho$ is real, that is, $\tr \rho(\gamma) \in \RR$ for any $\gamma \in \pi_1(-Y)$.
By Lemma~\ref{lem:real-character-zero-volume}, the volumes of these $10$ representations are zero.

On the other hand, $Y$ is known to be hyperbolic, and hence it has the holonomy representation.
Therefore, the two representations corresponding to complex roots must be the holonomy representation and its complex conjugate.
From Table~\ref{tab:SL2reps}, we have $\tr \rho_1(\lambda) \approx 1.96292$, which is a root of $p_6$.
Since both the holonomy representation and $\rho_1$ arise from the same irreducible polynomial $p_6$, they are Galois conjugate by Lemma~\ref{lem:same_polynomial} below.
\end{proof}

\begin{lemma}
\label{lem:same_polynomial}
Let $j \in \{2,4,6\}$.
If representations $\rho$ and $\rho'$ arise from the polynomial $p_j$, then they are Galois conjugate up to equivalence.
\end{lemma}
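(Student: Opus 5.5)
Each of the $12$ representations is determined, up to equivalence, by the pair $(L_0,M_0)=(L_0,L_0^2)$ (modulo $(L_0,M_0)\sim(L_0^{-1},M_0^{-1})$) together with a root $u_0$ of $\phi(M_0,u)=0$. The discussion preceding Table~\ref{tab:SL2reps} shows that exactly one of the three roots $u$ of $\phi(L_0^2,u)=0$ satisfies $\rho(\mu)\rho(\lambda)^{-2}=I_2$. The plan is therefore to express $u_0$ rationally in terms of $L_0$. Then a field automorphism carrying $L_0$ to $L_0'$ will carry $\rho_{M_0,u_0}$ to $\rho_{M_0',u_0'}$, and the latter is the representation attached to $L_0'$.

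Concretely, fix $j$ and let $x_0=L_0+L_0^{-1}$ and $x_0'=L_0'+L_0'^{-1}$ be two roots of the irreducible polynomial $p_j$ (for $j=6$, irreducibility over $\QQ$ must be checked directly; for $p_2$ and $p_4$ it is a quick rational-root and factorization check). Let $F=\QQ(L_0,u_0)$, and let $\sigma$ be an embedding of $\QQ(x_0)$ into $\CC$ sending $x_0\mapsto x_0'$. Extend $\sigma$ to $\QQ(L_0)$; since $L_0$ satisfies $L^2-x_0L+1=0$, the image $\sigma(L_0)$ is either $L_0'$ or $L_0'^{-1}$. Both choices give equivalent representations, so we may assume $\sigma(L_0)=L_0'$. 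Extend $\sigma$ further to $F$. Applying $\sigma$ to the identities $\phi(L_0^2,u_0)=0$ and $\rho_{M_0,u_0}(\mu)\rho_{M_0,u_0}(\lambda)^{-2}=I_2$, which are polynomial identities with integer coefficients in the entries, shows that $\rho_{M_0,u_0}^\sigma=\rho_{L_0'^2,\sigma(u_0)}$ is again a representation of $\pi_1(-Y)$. Its longitude has $(1,1)$-entry $\sigma(L_0)=L_0'$, because that entry is a polynomial in $M$ and $u$ over $\ZZ$. This argument does not need $u_0$ to be a rational function of $L_0$. Since the $12$ representations are characterized by $\tr\rho(\lambda)$, as established before Table~\ref{tab:SL2reps}, the representation $\rho^\sigma$ has trace $x_0'$ on $\lambda$ and is therefore equivalent to $\rho'$.

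The main obstacle is rigor in two places. The first is the irreducibility of $p_6$ (and of $p_4$) over $\QQ$, which I would verify by a factorization check. The second is the characterization-by-trace claim, which relies partly on numerical checks from the table discussion; I would invoke it exactly as it is used in the proof of Theorem~\ref{thm:main-computation}.
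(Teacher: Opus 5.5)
Your proposal is correct and follows essentially the same route as the paper. Both take an embedding sending one root of the irreducible polynomial $p_j$ to the other, extend it to a number field containing the matrix entries, and conclude $[\rho^\sigma]=[\rho']$ from $\tr\rho^\sigma(\lambda)=\sigma(\tr\rho(\lambda))$ together with the characterization of the $12$ representations by the trace on $\lambda$. The only difference is that the paper takes $\sigma\in\Gal(\overline{\QQ}/\QQ)$ directly and restricts it to a composite field, whereas you extend step by step through $\QQ(x_0)\subset\QQ(L_0)\subset\QQ(L_0,u_0)$.

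The irreducibility you flag is indeed needed, and it holds; for instance, $p_6$ is already irreducible modulo $2$. As you observe yourself, the opening plan of writing $u_0$ as a rational function of $L_0$ is unnecessary.
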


\begin{proof}
Let $a$ and $a'$ be the roots of $p_j$ in $\CC$ which provide $\rho$ and $\rho'$, respectively.
Applying Proposition~\ref{prop:main-computation-algebraic} to $\rho$ and $\rho'$, we obtain two number fields, and let $F\subset \CC$ be their composite.
Then the images of $\rho$ and $\rho'$ lie in $\SL(2,F)$.
Since $a$ and $a'$ are both roots of the same irreducible polynomial $p_j$ over $\QQ$, there exists an element $\sigma \in \Gal(\overline{\QQ}/\QQ) = \{\tau \in \mathrm{Aut}(\overline{\QQ}) \mid \tau|_{\QQ}=\mathrm{id}_{\QQ}\}$ satisfying $\sigma(a) = a'$.
It induces a field embedding $\sigma \colon F \hookrightarrow \CC$ and we have a representation $\rho^\sigma$.
By construction, $\tr \rho^\sigma(\lambda) = \tr \rho'(\lambda)$.
Since the $12$ representations are characterized by the trace on $\lambda$, we conclude that $[\rho^\sigma] = [\rho']$.
\end{proof}

\raggedbottom
\bibliographystyle{alpha}
\bibliography{Chern-Simons}

@misc{ADHLP22,
Author = {Paolo Aceto and Aliakbar Daemi and Jennifer Hom and Tye Lidman and JungHwan Park},
Title = {Handle decomposition complexity and representation spaces},
Year = {2024},
howpublished = {arXiv:2210.06607v2},
}

@book{MR03,
  author    = {Maclachlan, Colin and Reid, Alan W.},
  title     = {The Arithmetic of Hyperbolic {3}-Manifolds},
  series    = {Graduate Texts in Mathematics},
  volume    = {219},
  publisher = {Springer},
  address   = {New York},
  year      = {2003},
  doi       = {10.1007/978-1-4757-6720-9},
  isbn      = {978-0-387-98386-8}
}

@misc{Thu80,
  author = {Thurston, William P.},
  title  = {The Geometry and Topology of Three-Manifolds},
  year   = {1980},
  note   = {Princeton University lecture notes, electronic version 1.1},
  url    = {https://library.slmath.org/nonmsri/gt3m/}
}

@article{FS90,
  author  = {Fintushel, Ronald and Stern, Ronald J.},
  title   = {Instanton homology of {S}eifert fibred homology three spheres},
  journal = {Proceedings of the London Mathematical Society},
  series  = {3},
  volume  = {61},
  number  = {1},
  pages   = {109--137},
  year    = {1990},
}

@article{Fur90,
  author  = {Furuta, Mikio},
  title   = {Homology cobordism group of homology 3-spheres},
  journal = {Inventiones Mathematicae},
  volume  = {100},
  number  = {2},
  pages   = {339--355},
  year    = {1990},
}

@article{Fro02,
  author  = {Fr{\o}yshov, Kim A.},
  title   = {Equivariant aspects of {Y}ang--{M}ills {F}loer theory},
  journal = {Topology},
  volume  = {41},
  number  = {3},
  pages   = {525--552},
  year    = {2002},
}

@article{OS03,
  author  = {Ozsv{\'a}th, Peter and Szab{\'o}, Zolt{\'a}n},
  title   = {Absolutely graded {F}loer homologies and intersection forms for four-manifolds with boundary},
  journal = {Advances in Mathematics},
  volume  = {173},
  number  = {2},
  pages   = {179--261},
  year    = {2003},
}

@article{HHL21,
  author  = {Hendricks, Kristen and Hom, Jennifer and Lidman, Tye},
  title   = {Applications of involutive {H}eegaard {F}loer homology},
  journal = {Journal of the Institute of Mathematics of Jussieu},
  volume  = {20},
  number  = {1},
  pages   = {187--224},
  year    = {2021},
  doi     = {10.1017/S147474801900015X},
}

@article{HM17,
  author  = {Hendricks, Kristen and Manolescu, Ciprian},
  title   = {Involutive {H}eegaard {F}loer homology},
  journal = {Duke Mathematical Journal},
  volume  = {166},
  number  = {7},
  pages   = {1211--1299},
  year    = {2017},
  doi     = {10.1215/00127094-3793141},
}

@article{DM19,
  author  = {Dai, Irving and Manolescu, Ciprian},
  title   = {Involutive {H}eegaard {F}loer homology and plumbed three-manifolds},
  journal = {Journal of the Institute of Mathematics of Jussieu},
  volume  = {18},
  number  = {6},
  pages   = {1115--1155},
  year    = {2019},
  doi     = {10.1017/S1474748017000329},
}

@article{BS26,
  author  = {Baldwin, John A. and Sivek, Steven},
  title   = {Framed instanton homology and concordance, {II}},
  journal = {Transactions of the American Mathematical Society},
  volume  = {379},
  number  = {2},
  pages   = {825--865},
  year    = {2026},
  doi     = {10.1090/tran/9320}
}

@article{FS85,
  author  = {Fintushel, Ronald and Stern, Ronald J.},
  title   = {Pseudofree orbifolds},
  journal = {Annals of Mathematics},
  volume  = {122},
  number  = {2},
  pages   = {335--364},
  year    = {1985},
  doi     = {10.2307/1971306}
}

@article {Man03,
    AUTHOR = {Manolescu, Ciprian},
     TITLE = {Seiberg-{W}itten-{F}loer stable homotopy type of
              three-manifolds with {$b_1=0$}},
   JOURNAL = {Geom. Topol.},
  FJOURNAL = {Geometry and Topology},
    VOLUME = {7},
      YEAR = {2003},
     PAGES = {889--932},
      ISSN = {1465-3060,1364-0380},
   MRCLASS = {57R58 (37B30 57R57)},
  MRNUMBER = {2026550},
MRREVIEWER = {Klaus\ Mohnke},
       DOI = {10.2140/gt.2003.7.889},
       URL = {https://doi.org/10.2140/gt.2003.7.889},
}

@article{Man14,
  author  = {Manolescu, Ciprian},
  title   = {On the intersection forms of spin four-manifolds with boundary},
  journal = {Mathematische Annalen},
  volume  = {359},
  number  = {3--4},
  pages   = {695--728},
  year    = {2014},
  doi     = {10.1007/s00208-014-1010-1}
}

@article {Fro10,
    AUTHOR = {Fr{\o}yshov, Kim A.},
     TITLE = {Monopole {F}loer homology for rational homology 3-spheres},
   JOURNAL = {Duke Math. J.},
  FJOURNAL = {Duke Mathematical Journal},
    VOLUME = {155},
      YEAR = {2010},
    NUMBER = {3},
     PAGES = {519--576},
      ISSN = {0012-7094,1547-7398},
   MRCLASS = {57R58 (57R57)},
  MRNUMBER = {2738582},
MRREVIEWER = {Brendan\ E.\ Owens},
       DOI = {10.1215/00127094-2010-060},
       URL = {https://doi.org/10.1215/00127094-2010-060},
}

@article{FF00,
  author  = {Fukumoto, Yoshihiro and Furuta, Mikio},
  title   = {Homology 3-spheres bounding acyclic 4-manifolds},
  journal = {Mathematical Research Letters},
  volume  = {7},
  number  = {6},
  pages   = {757--766},
  year    = {2000},
  doi     = {10.4310/MRL.2000.v7.n6.a8}
}

@incollection{Mat78,
  author = {Matumoto, Takao},
  title = {Triangulation of manifolds},
  booktitle = {Algebraic and Geometric Topology Part 2},
  series = {Proceedings of Symposia in Pure Mathematics},
  volume = {32},
  pages = {3--6},
  publisher = {American Mathematical Society},
  address = {Providence, RI},
  year = {1978}
}

@article{GS80,
  author = {Galewski, David E. and Stern, Ronald J.},
  title = {Classification of simplicial triangulations of topological manifolds},
  journal = {Ann. of Math. (2)},
  volume = {111},
  number = {1},
  pages = {1--34},
  year = {1980},
  doi = {10.2307/1971215}
}

@article {Gro82,
    AUTHOR = {Gromov, Michael},
     TITLE = {Volume and bounded cohomology},
   JOURNAL = {Inst. Hautes \'Etudes Sci. Publ. Math.},
  FJOURNAL = {Institut des Hautes \'Etudes Scientifiques. Publications
              Math\'ematiques},
    VOLUME = {56},
      YEAR = {1982},
     PAGES = {5--99},
      ISSN = {0073-8301,1618-1913},
   MRCLASS = {53C20 (53C21 57R99 58E99)},
  MRNUMBER = {686042},
MRREVIEWER = {Karsten\ Grove},
       URL = {http://www.numdam.org/item?id=PMIHES_1982__56__5_0},
}

@misc{Fro23q2,
  author = {Fr{\o}yshov, Kim A.},
  title = {Mod 2 instanton homology and 4-manifolds with boundary},
  howpublished = {arXiv:2307.03950v2},
  year = {2024}
}

@article{FLin25,
  author  = {Lin, Francesco},
  title   = {Homology cobordism and the geometry of hyperbolic three-manifolds},
  journal = {Advances in Mathematics},
  volume  = {462},
  pages   = {110087},
  year    = {2025},
  doi     = {10.1016/j.aim.2024.110087}
}

@article{DIST25,
  author  = {Daemi, Aliakbar and Imori, Hayato and Sato, Kouki
             and Scaduto, Christopher and Taniguchi, Masaki},
  title   = {Instantons, special cycles and knot concordance},
  journal = {Geometry \& Topology},
  volume  = {29},
  number  = {8},
  pages   = {4189--4298},
  year    = {2025},
  doi     = {10.2140/gt.2025.29.4189}
}

@article{Sto17,
  author  = {Stoffregen, Matthew},
  title   = {Manolescu invariants of connected sums},
  journal = {Proceedings of the London Mathematical Society},
  series  = {3},
  volume  = {115},
  number  = {5},
  pages   = {1072--1117},
  year    = {2017},
  doi     = {10.1112/plms.12060}
}

@article{LS26,
  author  = {Lee, Jaewon and {\c S}avk, O{\u g}uz},
  title   = {On homology spheres of the trivial local equivalence class},
  journal = {Mathematical Proceedings of the Cambridge Philosophical Society},
  year    = {2026},
  doi     = {10.1017/S0305004126102096},
}

@article{HMZ18,
  author  = {Hendricks, Kristen and Manolescu, Ciprian and Zemke, Ian},
  title   = {A connected sum formula for involutive {H}eegaard {F}loer homology},
  journal = {Selecta Mathematica. New Series},
  volume  = {24},
  number  = {2},
  pages   = {1183--1245},
  year    = {2018},
}

@article{HSZ26,
  author  = {Hendricks, Kristen and Stoffregen, Matthew and Zemke, Ian},
  title   = {A note on the involutive invariants of splices},
  journal = {Algebraic \& Geometric Topology},
  volume  = {26},
  number  = {5},
  pages   = {1907--1921},
  year    = {2026},
  doi     = {10.2140/agt.2026.26.1907},
}

@misc{SnapPy,
     author={Culler, Marc and Dunfield, Nathan M. and Goerner,
     Matthias and Weeks, Jeffrey R.},
     title={Snap{P}y, a computer program for studying the geometry and topology of $3$-manifolds},
     howpublished={Available at \url{http://snappy.computop.org} v3.3.2 (28/07/2026)},
}

@article{Sav24,
  author  = {{\c S}avk, O{\u g}uz},
  title   = {A survey of the homology cobordism group},
  journal = {Bulletin of the American Mathematical Society},
  volume  = {61},
  number  = {1},
  pages   = {119--157},
  year    = {2024},
  doi     = {10.1090/bull/1806}
}

@article{Muk20,
  author  = {Mukherjee, Anubhav},
  title   = {A note on embeddings of 3-manifolds in symplectic 4-manifolds},
  journal = {Algebraic \& Geometric Topology},
  volume  = {25},
  number  = {6},
  pages   = {3251--3270},
  year    = {2025}
}

@article {Sto20,
    AUTHOR = {Stoffregen, Matthew},
     TITLE = {Pin(2)-equivariant {S}eiberg-{W}itten {F}loer homology of
              {S}eifert fibrations},
   JOURNAL = {Compos. Math.},
  FJOURNAL = {Compositio Mathematica},
    VOLUME = {156},
      YEAR = {2020},
    NUMBER = {2},
     PAGES = {199--250},
      ISSN = {0010-437X,1570-5846},
   MRCLASS = {57K31 (57R58)},
  MRNUMBER = {4044465},
MRREVIEWER = {Nikolai\ N.\ Saveliev},
       DOI = {10.1112/s0010437x19007620},
       URL = {https://doi.org/10.1112/s0010437x19007620},
}

@article{Mye83,
  author  = {Myers, Robert},
  title   = {Homology cobordisms, link concordances, and hyperbolic 3-manifolds},
  journal = {Transactions of the American Mathematical Society},
  volume  = {278},
  number  = {1},
  pages   = {271--288},
  year    = {1983},
  doi     = {10.2307/1999315}
}

@article{neumann1995rationality,
  title={Rationality problems for {K}-theory and {Chern-Simons} invariants of hyperbolic manifolds},
  author={Neumann, Walter D and Yang, Jun},
  journal={Enseignement Math{\'e}matique},
  volume={41},
  pages={281--296},
  year={1995},
  publisher={SWETS \& ZEITLINGER}
}

@article{Gol82,
  author  = {Goldman, William M.},
  title   = {Characteristic classes and representations of discrete subgroups of {Lie} groups},
  journal = {Bulletin of the American Mathematical Society (New Series)},
  volume  = {6},
  number  = {1},
  pages   = {91--94},
  year    = {1982},
  doi     = {10.1090/S0273-0979-1982-14974-6}
}

@article{Ker69,
  author  = {Kervaire, Michel A.},
  title   = {Smooth Homology Spheres and Their Fundamental Groups},
  journal = {Transactions of the American Mathematical Society},
  volume  = {144},
  pages   = {67--72},
  year    = {1969}
}

@article{Fre82,
  author  = {Freedman, Michael H.},
  title   = {The Topology of Four-Dimensional Manifolds},
  journal = {Journal of Differential Geometry},
  volume  = {17},
  number  = {3},
  pages   = {357--453},
  year    = {1982}
}

@book{Eis95,
  author    = {David Eisenbud},
  title     = {Commutative Algebra with a View Toward Algebraic Geometry},
  series    = {Graduate Texts in Mathematics},
  volume    = {150},
  publisher = {Springer-Verlag},
  address   = {New York},
  year      = {1995},
  doi       = {10.1007/978-1-4612-5350-1}
}

@article{KM63,
  author  = {Kervaire, Michel A. and Milnor, John W.},
  title   = {Groups of Homotopy Spheres: I},
  journal = {Annals of Mathematics},
  volume  = {77},
  number  = {3},
  pages   = {504--537},
  year    = {1963}
}

@phdthesis{Gon70,
  author = {Gonz{\'a}lez-Acu{\~n}a, Francisco Javier},
  title  = {On Homology Spheres},
  school = {Princeton University},
  year   = {1970}
}

@article{Dun99,
  author  = {Dunfield, Nathan M.},
  title   = {Cyclic surgery, degrees of maps of character curves, and volume rigidity for hyperbolic manifolds},
  journal = {Inventiones Mathematicae},
  volume  = {136},
  number  = {3},
  pages   = {623--657},
  year    = {1999},
  doi     = {10.1007/s002220050321}
}

@article{HHSZ,
  author  = {Hendricks, Kristen and Hom, Jennifer and Stoffregen, Matthew
             and Zemke, Ian},
  title   = {On the quotient of the homology cobordism group by {Seifert} spaces},
  journal = {Transactions of the American Mathematical Society, Series B},
  volume  = {9},
  year    = {2022},
  pages   = {757--781}
}

@book{K3,
  editor    = {Baykur, R. {\.I}nan{\c c} and Kirby, Robion C. and Ruberman, Daniel},
  title     = {K3: A New Problem List in Low-Dimensional Topology},
  series    = {Mathematical Surveys and Monographs},
  volume    = {295},
  publisher = {American Mathematical Society},
  year      = {2026},
  isbn      = {978-1-4704-8433-0}
}

@article{GZ07,
  author  = {Goette, Sebastian and Zickert, Christian K.},
  title   = {The Extended {Bloch} Group and the {Cheeger--Chern--Simons} Class},
  journal = {Geometry \& Topology},
  volume  = {11},
  year    = {2007}
}

@article{Rez96,
  author  = {Reznikov, Alexander},
  title   = {Rationality of secondary classes},
  journal = {Journal of Differential Geometry},
  volume  = {43},
  number  = {3},
  pages   = {674--692},
  year    = {1996},
  doi     = {10.4310/jdg/1214458328}
}

@article{Man16,
	author = {Manolescu, Ciprian},
	doi = {10.1090/jams829},
	fjournal = {Journal of the American Mathematical Society},
	issn = {0894-0347,1088-6834},
	journal = {J. Amer. Math. Soc.},
	mrclass = {57R58 (57M27 57Q15)},
	mrnumber = {3402697},
	mrreviewer = {Tirasan\ Khandhawit},
	number = {1},
	pages = {147--176},
	title = {Pin(2)-equivariant {S}eiberg-{W}itten {F}loer homology and the triangulation conjecture},
	url = {https://doi.org/10.1090/jams829},
	volume = {29},
	year = {2016}}

@article{Tan22,
  author  = {Taniguchi, Masaki},
  title   = {Seifert hypersurfaces of 2-knots and {Chern--Simons} functional},
  journal = {Quantum Topology},
  volume  = {13},
  number  = {2},
  pages   = {335--405},
  year    = {2022},
  doi     = {10.4171/QT/165}
}

@article {KirkKlassen90,
    AUTHOR = {Kirk, Paul A. and Klassen, Eric P.},
     TITLE = {Chern-{S}imons invariants of {$3$}-manifolds and
              representation spaces of knot groups},
   JOURNAL = {Math. Ann.},
  FJOURNAL = {Mathematische Annalen},
    VOLUME = {287},
      YEAR = {1990},
    NUMBER = {2},
     PAGES = {343--367},
      ISSN = {0025-5831,1432-1807},
   MRCLASS = {57M99 (53C05)},
  MRNUMBER = {1054574},
MRREVIEWER = {Michael\ Kapovich},
       DOI = {10.1007/BF01446898},
       URL = {https://doi.org/10.1007/BF01446898},
}

@article {KiNo22,
    AUTHOR = {Kitano, Teruaki and Nozaki, Yuta},
     TITLE = {An algebraic property of {R}eidemeister torsion},
   JOURNAL = {Trans. London Math. Soc.},
  FJOURNAL = {Transactions of the London Mathematical Society},
    VOLUME = {9},
      YEAR = {2022},
    NUMBER = {1},
     PAGES = {136--157},
      ISSN = {2052-4986},
   MRCLASS = {57K31 (11R04 13P15 57M05 57Q10)},
  MRNUMBER = {4535660},
MRREVIEWER = {Joan\ Porti},
       DOI = {10.1112/tlm3.12049},
       URL = {https://doi.org/10.1112/tlm3.12049},
}

@book{Don02,
  author    = {Donaldson, Simon K.},
  title     = {Floer Homology Groups in Yang--Mills Theory},
  series    = {Cambridge Tracts in Mathematics},
  volume    = {147},
  publisher = {Cambridge University Press},
  address   = {Cambridge},
  year      = {2002},
  note      = {With the assistance of M. Furuta and D. Kotschick},
  mrnumber  = {1883043}
}

@misc{ADMT23,
  author        = {Alfieri, Antonio and Dai, Irving and Mallick, Abhishek
                   and Taniguchi, Masaki},
  title         = {Involutions and the {C}hern-{S}imons filtration in
                   instanton {F}loer homology},
  year          = {2023},
  howpublished  = {2309.02309v2},
}

@article{CCGLS94,
	author = {Cooper, D. and Culler, M. and Gillet, H. and Long, D. D. and Shalen, P. B.},
	doi = {10.1007/BF01231526},
	fjournal = {Inventiones Mathematicae},
	issn = {0020-9910},
	journal = {Invent. Math.},
	mrclass = {57N10 (57M25)},
	mrnumber = {1288467},
	mrreviewer = {Serge L. Tabachnikov},
	number = {1},
	pages = {47--84},
	title = {Plane curves associated to character varieties of {$3$}-manifolds},
	url = {https://doi.org/10.1007/BF01231526},
	volume = {118},
	year = {1994},
}

@article{Ril84,
	author = {Riley, Robert},
	doi = {10.1093/qmath/35.2.191},
	fjournal = {The Quarterly Journal of Mathematics. Oxford. Second Series},
	issn = {0033-5606},
	journal = {Quart. J. Math. Oxford Ser. (2)},
	mrclass = {20F38 (57M25)},
	mrnumber = {745421},
	mrreviewer = {Norbert Wielenberg},
	number = {138},
	pages = {191--208},
	title = {Nonabelian representations of {$2$}-bridge knot groups},
	url = {https://mathscinet.ams.org/mathscinet-getitem?mr=745421},
	volume = {35},
	year = {1984},
}

@article{DHST23,
	author = {Dai, Irving and Hom, Jennifer and Stoffregen, Matthew and Truong, Linh},
	doi = {10.1215/00127094-2022-0082},
	fjournal = {Duke Mathematical Journal},
	issn = {0012-7094,1547-7398},
	journal = {Duke Math. J.},
	mrclass = {57Q60 (57R58)},
	mrnumber = {4654053},
	mrreviewer = {Greg\ Friedman},
	number = {12},
	pages = {2365--2432},
	title = {An infinite-rank summand of the homology cobordism group},
	url = {https://doi.org/10.1215/00127094-2022-0082},
	volume = {172},
	year = {2023}}

@article{Dai20,
	author = {Daemi, Aliakbar},
	doi = {10.1215/00127094-2020-0017},
	fjournal = {Duke Mathematical Journal},
	issn = {0012-7094,1547-7398},
	journal = {Duke Math. J.},
	mrclass = {57R58 (57R57)},
	mrnumber = {4158669},
	mrreviewer = {Jianfeng\ Lin},
	number = {15},
	pages = {2827--2886},
	title = {Chern-{S}imons functional and the homology cobordism group},
	url = {https://doi.org/10.1215/00127094-2020-0017},
	volume = {169},
	year = {2020}}

@article{Neu04,
	author = {Neumann, Walter D.},
	doi = {10.2140/gt.2004.8.413},
	fjournal = {Geometry and Topology},
	issn = {1465-3060,1364-0380},
	journal = {Geom. Topol.},
	mrclass = {57M27 (11G55 19E99 57R20 57T99)},
	mrnumber = {2033484},
	mrreviewer = {Kerry\ N.\ Jones},
	pages = {413--474},
	title = {Extended {B}loch group and the {C}heeger-{C}hern-{S}imons class},
	url = {https://doi.org/10.2140/gt.2004.8.413},
	volume = {8},
	year = {2004}}

@article{NST24,
	author = {Nozaki, Yuta and Sato, Kouki and Taniguchi, Masaki},
	doi = {10.4171/jems/1371},
	fjournal = {Journal of the European Mathematical Society (JEMS)},
	issn = {1435-9855,1435-9863},
	journal = {J. Eur. Math. Soc. (JEMS)},
	mrclass = {57Q60 (57K31 57R58 58J28 81T13)},
	mrnumber = {4780492},
	mrreviewer = {Matthew\ Stoffregen},
	number = {12},
	pages = {4699--4761},
	title = {Filtered instanton {F}loer homology and the homology cobordism group},
	url = {https://doi.org/10.4171/jems/1371},
	volume = {26},
	year = {2024}}

@article {Som81,
    AUTHOR = {Soma, Teruhiko},
     TITLE = {The {G}romov invariant of links},
   JOURNAL = {Invent. Math.},
  FJOURNAL = {Inventiones Mathematicae},
    VOLUME = {64},
      YEAR = {1981},
    NUMBER = {3},
     PAGES = {445--454},
      ISSN = {0020-9910,1432-1297},
   MRCLASS = {57N10 (51M25 58F09)},
  MRNUMBER = {632984},
MRREVIEWER = {John\ Hempel},
       DOI = {10.1007/BF01389276},
       URL = {https://doi.org/10.1007/BF01389276},
}

@book {Fri17,
    AUTHOR = {Frigerio, Roberto},
     TITLE = {Bounded cohomology of discrete groups},
    SERIES = {Mathematical Surveys and Monographs},
    VOLUME = {227},
 PUBLISHER = {American Mathematical Society, Providence, RI},
      YEAR = {2017},
     PAGES = {xvi+193},
      ISBN = {978-1-4704-4146-3},
   MRCLASS = {57T10 (20J06 37C85 55N10 57M07 57N16)},
  MRNUMBER = {3726870},
MRREVIEWER = {Clara\ L\"oh},
       DOI = {10.1090/surv/227},
       URL = {https://doi.org/10.1090/surv/227},
}

@article {CoLo96,
    AUTHOR = {Cooper, D. and Long, D. D.},
     TITLE = {Remarks on the {$A$}-polynomial of a knot},
   JOURNAL = {J. Knot Theory Ramifications},
  FJOURNAL = {Journal of Knot Theory and its Ramifications},
    VOLUME = {5},
      YEAR = {1996},
    NUMBER = {5},
     PAGES = {609--628},
      ISSN = {0218-2165,1793-6527},
   MRCLASS = {57M25},
  MRNUMBER = {1414090},
MRREVIEWER = {Mark\ E.\ Kidwell},
       DOI = {10.1142/S0218216596000357},
       URL = {https://doi.org/10.1142/S0218216596000357},
}

\end{document}